\theoremstyle{plain}
\newtheorem{thm}					{Theorem}[section]
\newtheorem{lemma}		[thm]		{Lemma}
\newtheorem{cor}			[thm]	{Corollary}
\newtheorem{proposition}	[thm]	{Proposition}
\theoremstyle{definition}
\theoremstyle{remark}
\newtheorem{rem}			[thm]	{Remark}
\newtheorem*{exa*}					{Example}
\newtheorem*{rem*}					{Remark}
\DeclareMathOperator	{\IN}		{\mathbb{N}} 	
	\DeclareMathOperator	{\IR}		{\mathbb{R}}
\newcommand{\R}{\IR}
\DeclareMathOperator	{\tr}		{tr}
\DeclareMathOperator	{\esssup}	{\esssup}
\DeclarePairedDelimiter	\abs		{\lvert}	{\rvert}
\DeclarePairedDelimiter	\skal		{\langle}	{\rangle}
\newcommand				{\eins}		{\text{$\mathds{1}$}}
\renewcommand 		{\epsilon}	{\varepsilon}
\renewcommand			{\phi}		{\varphi}
\newcommand 		{\vep}	{\varepsilon}
\numberwithin			{equation}{section}
\title{Large deviations and a phase transition in the Block Spin Potts models}
\author[Holger Kn\"opfel]{Holger Kn\"opfel}
\address[Holger Kn\"opfel]{Fachbereich Mathematik und Informatik,
Universit\"at M\"unster,
Einsteinstra\ss e 62,
48149 M\"unster,
Germany}
\email[Holger Kn\"opfel]{Holger.Knoepfel@ruhr-uni-bochum.de}
\author[Matthias L\"owe]{Matthias L\"owe}
\address[Matthias L\"owe]{Fachbereich Mathematik und Informatik,
Universit\"at M\"unster,
Einsteinstra\ss e 62,
48149 M\"unster,
Germany}
\email[Matthias L\"owe]{maloewe@math.uni-muenster.de}
\author[Holger Sambale]{Holger Sambale}
\address[Holger Sambale]{Fakult\"at f\"ur Mathematik,
Universit\"at Bielefeld,
Postfach 100131,
33501 Bielefeld,
Germany}
\email[Holger Sambale]{hsambale@math.uni-bielefeld.de}
\begin{document}
\subjclass{Primary 60F10, Secondary 82B20}
\keywords{block spin Potts model, large deviation principle, logarithmic Sobolev inequality, phase transition}
\thanks{Research of the second author was
funded by the Deutsche Forschungsgemeinschaft (DFG, German Research Foundation) under Germany 's
Excellence Strategy
EXC 2044-390685587, Mathematics M\"unster: Dynamics-Geometry-Structure. Research of the third author was funded by the Deutsche Forschungsgemeinschaft via the CRC 1283 \emph{Taming uncertainty and profiting from randomness and low regularity in analysis, stochastics and their applications}.}

\date{\today}
\begin{abstract}
We introduce and analyze a generalization of the blocks spin Ising (Curie-Weiss) models that were
discussed in a number of recent articles. In these block spin models each spin in one of $s$ blocks
can take one of a finite number of $q \ge 3$ values, hence the name block spin Potts model.
The values a spin can take are called colors. We prove a large deviation principle for the
percentage of spins of a certain color in a certain block. These values are represented in an $s
\times q$ matrix. We show that for uniform block sizes and appropriately chosen interaction
strength
there is a phase transition. In some regime the only equilibrium is the uniform distribution of all
colors in all blocks, while in other parameter regimes there is one predominant color, and this is
the same color with the same frequency for all blocks. Finally, we establish log-Sobolev-type inequalities for the block spin Potts model.
\end{abstract}

\maketitle

\section{Introduction}
Mean-field models as the Curie--Weiss model are a first order approximation of lattice models, yet
they often show qualitatively interesting results (see \cite{Ell06} for a classic survey). In
particular,
mean-field block models have been proposed as an approximation of lattice models for meta-magnets,
see e.\,g.\ \cite{KC75}.
To describe them, assume that we have $N$ interacting particles that carry a spin. Also assume that
we can group these particles into several groups. The interaction is such that particles within the
same group interact with one interaction strength, while particles in different groups have
another,
usually smaller, interaction strength.

In a sequence of papers the statistical mechanics of such models was studied from various points of
view, see \cite{GC08}, \cite{FC11},
\cite{Col14}, \cite{LS18}, \cite{KT18}, \cite{KLSS19}, \cite{KT20}. In particular they were also
discussed as models for social interactions between several groups, e.\,g.\ in \cite{GBC09},
\cite{ABC10}, \cite{OOA18}, \cite{LSV20} (the latter paper studies a combination of sparse Ising
models on Erd\"os--R\'enyi graphs as in \cite{BG93b}, \cite{KLS19}, or \cite{KLS20} and block
models).
Recently, block models have also been studied in a statistical context (see \cite{BRS19},
\cite{LS20}). Here the task is to exactly recover the block structure from a given number of
realizations of the model and it turns out that this can be done surprisingly effectively.

However, all the literature cited above deals with Ising spins, i.\,e.\ the spins take two values
(usually $\pm 1$). Of course, the physics literature knows many more spin models than just the
Ising
model, in particular models with a continuous spin as Heisenberg models and XY-models.

On the discrete side Potts models (cf. e.\,g.\ \cite{WuPotts,
Kesten_Potts,EllisWang-LimitTheoremsForTheEmpiricalVectorOfTheCWPottsModel,EllisTouchette_Potts})
are the most natural generalization of Ising models. For them each particle carries a spin from a
finite set (of cardinality $3$ or larger).

The aim of the present note is to investigate block spin Potts models as a natural generalization
of
block spin Ising models. We will basically concentrate on models where the blocks have
approximately identical size and where the interaction is purely ferromagnetic, i.\,e.\ particles tend
to have the same spin, no matter, whether they are in the same block or in different ones.

Similar to \cite{LS18} and parts of \cite{KLSS19} our main tool are large deviation techniques.
Indeed, as we will see in Section 3, it is not too difficult to establish a large deviation
principle for the ``block magnetizations''. However, to derive a limit theorem with
an explicit limit law from there turns out to be more complicated than in the case of Ising spins
(which is quite a common feature in Potts models).

The rest of this note is organized in the following way. In the next section we will describe the
block spin Potts model. Section 3 contains a large deviation analysis of this model. In
Section 4, we will concentrate on a version with blocks of asymptotically equal size and compute
the
possible limit laws for such models. Finally, in Section 5 we prove (modified) logarithmic Sobolev inequalities for the block spin Potts model and discuss some background and possible applications.

Let us mention at this point that, while we were finishing the current manuscript we learned that in \cite{Liu2020}
the author studies a very similar model: Here the number of blocks is restricted to two, but they may be of different size. His techniques, however are different from ours. Moreover, extending his results,  we prove a large deviation principle, are able to locate the minima of the rate functions and show logarithmic Sobolev inequalities.

\emph{Acknowledgment.} The authors would like to thank Arthur Sinulis for fruitful discussions.

\section{The model}

In the sequel we will consider the following model. Take the set $S=\{1, \ldots, N\}$ and partition
$S$ into $s$ sets $S_1, \ldots, S_s$. These sets will, of course, depend on $N$ and we assume that
the limits $\gamma_k:= \lim_{N \to \infty}\frac{|S_k|}{N} \in (0,1)$ exist (and, of course,
$\sum_{k=1}^s \gamma_k=1$).

Moreover, take an integer $ q \ge 3$ and for $\omega \in \{1,\ldots,q \}^S$ and
$0<\alpha < \beta$ introduce the Hamiltonian
$$
H_{N,\alpha,\beta}(\omega):=H_N(\omega):= -\frac{\beta}{2N} \sum_{i \sim j}
\eins_{\omega_i=\omega_j}
-\frac{\alpha}{2N} \sum_{i \not\sim j}  \eins_{\omega_i=\omega_j}.
$$
Here $i \sim j$ means that the indices $i$ and $j$ belong to the same block $S_k$ (where the case
$i=j$ is included) for some $k\in \{1, \ldots, s\}$, while we write $i \not\sim j$, if this is not
the case. With $H_{N,\alpha,\beta}$ we will associate the Gibbs measure
$$
\mu_{N,\alpha,\beta}(\omega):=\mu_N(\omega):= \frac{\exp(-H_N(\omega))}{Z_{N,\alpha,\beta}}
$$
where, of course,
$$
Z_{N,\alpha,\beta}:= Z_N:= \sum_{\omega'} \exp(-H_N(\omega')).
$$
For $k \in \{1, \ldots, s\}$ and $c \in \{1, \ldots, q\}$ denote by $m_{k,c}$ the relative number
of spins of ``color'' $c$ in the block $S_k$, i.\,e.\
$$
m_{k,c}:=m_{k,c}(\omega):=\frac 1 {|S_k|} \sum_{i \in S_k} \eins_{\omega_i=c},
$$
and set $M_N:=(m_{k,c})\in M(s\times q)$. Note that $M_N$ is an order parameter of
the model in the sense that the Hamiltonian is a function of $M_N$ rather than $\omega$.

Indeed, since
$\eins_{\omega_i = \omega_j}=\sum_{c=1}^q \eins_{\omega_i = c} \eins_{\omega_j = c}$ we have
\begin{align}\label{representation H}
\begin{split}
-2N H_N(\omega) &=
\beta \sum_{c = 1}^q \sum_{k = 1}^s \sum_{i \in S_k} \sum_{j \in S_k} \eins_{\omega_i = c}
\eins_{\omega_j = c}
+ \alpha \sum_{c = 1}^q \sum_{k \neq k'}
\sum_{i \in S_k} \sum_{j \in S_{k'}} \eins_{\omega_i = c}
\eins_{\omega_j = c}
\\
&=\tr (B^tAB)
\end{split}
\end{align}
where $A_{\alpha,\beta}:=A \in M(s \times s)$ is the symmetric matrix with entries $\beta$ on and
$\alpha$ off the diagonal (the \emph{block interaction matrix}) and $B\in M(s\times q)$ has entries
\begin{equation}\label{matrixB}
 b_{k,c}=\sum_{i \in S_k} \eins_{\omega_i = c}=|S_k|m_{k,c}.
\end{equation}
Now $A$ is positive definite for $0<\alpha<\beta$ due to
$x^tAx=(\beta-\alpha)x^2+\alpha(\sum_kx_k)^2$ for $x\in\R^s$. So using the (unique) positive
definite symmetric matrix $\sqrt A$ we see that
\[
 \tr(B^tAB)=\tr\big((\sqrt{A} B)^t(\sqrt{A}B)\big)=[ \sqrt{A}B, \sqrt{A}B]
\]
denoting by $[\,,\,]$ the Frobenius scalar product. Hence
the Hamiltonian is a positive definite quadratic form of the matrix $B$ and we will write
$\tr(B^tAB)=\skal{B,B}_A$.
Now introducing the diagonal matrix
$\Gamma_N \in M(s \times s)$ given by $(\Gamma_N)_{k,k} = \abs{S_k}$
 we finally rewrite \eqref{representation H} as
\begin{equation}\label{eqn:HamiltonianAsScalarProduct}
H_N(\omega) = -\frac{1}{2N} \skal{\Gamma_N  M_N, \Gamma_N  M_N}_{A}.
\end{equation}
It is therefore natural to study the distribution of $M_N$ under the Gibbs measure $\mu_N$.

\section{\texorpdfstring{A Large Deviation Principle for $M_N$}{A Large Deviation Principle for
M\_N}}
In this section we prove a Large Deviation Principle (LDP) for the matrix $M_N$. The analysis of the
corresponding rate function will help us to determine the limiting behavior of $M_N$ and to prove
the existence of a phase transition.

Let us briefly recall the definition of a large deviation principle (cf. \cite{dH00} and
\cite{DZ10} for a rich survey of many large deviation results): For a Polish space $\mathcal{X}$
and an increasing sequence of non-negative real numbers $(a_n)_{n \in \IN}$ a sequence of
probability measures $(\nu_n)_n$ on $\mathcal{X}$ is said to satisfy a \emph{large deviation
principle} with speed $a_n$ and rate function $I: \mathcal{X} \to \IR$ (by which we mean a lower
semi-continuous function with compact level sets $\{x: I(x) \le L\}$ for all $L>0$), if for all
Borel sets $B \in \mathcal{B}(\mathcal{X})$ we have
\begin{equation*}
	-\inf_{x \in \mathrm{int}(B)} I(x) \le \liminf_{n \to \infty} \frac{\log \nu_n(B)}{a_n} \le
\limsup_{n \to \infty} \frac{\log \nu_n(B)}{a_n} \le -\inf_{x \in \mathrm{cl}(B)} I(x).
\end{equation*}
Here $\mathrm{int}(B)$ and $\mathrm{cl}(B)$ denote the topological interior and closure of a set
$B$, respectively.

We say that a sequence of random variables $X_n: \Omega \to \mathcal{X}$ satisfies an LDP with speed
$a_n$ and rate function $I: \mathcal{X} \to \IR$ under a sequence of measures $\mu_n$ if the
push-forward sequence $\nu_n \coloneqq \mu_n \circ X_n$ satisfies an LDP with speed $a_n$ and rate
function $I$.

Now if $M_N(k)$ denotes the row $k$ of $M_N$ for a fixed $k$, then under the uniform
measure $\rho$ on $\{1, \ldots, q\}$ the vector
$M_N(k)$ is the empirical vector of a $|S_k|$-fold
drawing from the alphabet $\{1, \ldots, q\}$. Thus, under the uniform measure
$\rho^{|S_k|}$ the vector $M_N(k)$ obeys an LDP with speed $|S_k|$ and a rate function that is
given by the relative entropy of a probability measure $\nu \in \mathcal{M}^1(\{1, \ldots, q\})$
with respect to $\rho$
$$
H(\nu|\rho):= \sum_{c=1}^q \nu(c) \log\frac{\nu(c)}{\rho(c)}
$$
(see e.\,g.\ \cite[Theorem 2.1.10]{DZ10} for a reference). Note that
$$
H(\nu|\rho)= \sum_{c=1}^q \nu(c) \log \nu(c)+\sum_{c=1}^q \nu(c) \log q =: H(\nu) + \log q,
$$
where $H(\nu)$ is the entropy of $\nu$ and we adopt the convention that $0 \log 0= 0$.

Now $M_N(k)$ are independent random vectors for $k=1, \ldots, s$. Consequently, for
$\overline \rho_N:= \bigotimes_{k=1}^s
\rho^{|S_k|}$ we have
$$
\frac 1 N \log \overline \rho_N (M_N\in B) = \frac 1 N \sum_{k=1}^s \log  \rho^{|S_k|} (M_N(k) \in
B_k).
$$
for any set $B= \prod_{k=1}^s B_k$ with Borel sets $B_k\subseteq \IR^q$
(here we associate probabilities $\nu$
on the set $\{1, \ldots, q\}$ with vectors in $\IR^q$ and define
$H(\nu|\rho)= H(\nu)=\infty$, if $\nu \in \IR^q$ does not have non-negative components summing to
1).
Together with the above mentioned LDP for the components $M_N(k)$ and the assumption that $|S_k|/N$
 converges to $\gamma_k$ as $N \to \infty$, this observation implies that the matrix $M_N$ under
$\overline \rho_N $ obeys an LDP with speed $N$ and rate function
\begin{equation}\label{rate_ind}
I(\nu) := \sum_{k=1}^s \gamma_k H(\nu_k|\rho)=\log q+\sum_{k=1}^s\gamma_kH(\nu_k)
\end{equation}
Here $\nu:=(\nu_k)_{1\le k\le s}\in M(s\times q)$ and the $\nu_k$ are probabilities on $\{1,
\ldots, q\}$, otherwise $I(\nu)$ is defined to be $\infty$. Thus we have seen
\begin{proposition}\label{prop ldp ind}
Under the measure $\overline \rho_N $ the matrix valued random variable $M_N$ obeys an LDP with
speed $N$ and
rate function $I$ given by \eqref{rate_ind}.
\end{proposition}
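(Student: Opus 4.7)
The plan is to piece the result together from three ingredients already assembled in the preceding paragraphs: Sanov's theorem for a finite alphabet, a speed rescaling, and the tensorization of independent LDPs.

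First, for each fixed $k$, I would invoke Sanov's theorem (in the form of \cite[Theorem 2.1.10]{DZ10}, as the authors cite): under $\rho^{|S_k|}$ the empirical vector $M_N(k) \in \IR^q$ satisfies an LDP with speed $|S_k|$ and rate function $H(\,\cdot\,|\rho)$, with the convention that the rate is $+\infty$ off the simplex. Since $|S_k|/N \to \gamma_k \in (0,1)$, a routine rescaling yields that under the same measure $M_N(k)$ obeys an LDP with speed $N$ and rate function $\gamma_k H(\,\cdot\,|\rho)$; this follows from multiplying numerator and denominator in $N^{-1}\log \rho^{|S_k|}(M_N(k) \in B_k)$ by $|S_k|/N$ and taking limits.

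Next, I would exploit independence. Under $\overline{\rho}_N = \bigotimes_{k=1}^s \rho^{|S_k|}$ the rows $M_N(1), \ldots, M_N(s)$ are independent, and for product (``rectangular'') Borel sets $B = \prod_{k=1}^s B_k$ the identity
\[
\frac{1}{N} \log \overline{\rho}_N(M_N \in B) = \sum_{k=1}^s \frac{1}{N} \log \rho^{|S_k|}(M_N(k) \in B_k)
\]
noted in the excerpt, combined with the single-block LDPs, immediately gives matching $\liminf$ and $\limsup$ bounds with rate $\sum_{k=1}^s \gamma_k H(\nu_k|\rho)$. To pass from rectangles to general Borel sets $B \subseteq M(s\times q)$, I would appeal to the standard tensorization lemma for LDPs of independent random variables (e.\,g.\ \cite[Exercise 4.2.7]{DZ10}), which states that the product of LDPs is an LDP with rate function the sum of the individual rate functions. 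This gives the LDP for $M_N$ under $\overline{\rho}_N$ with speed $N$ and rate function $\sum_k \gamma_k H(\nu_k|\rho)$, which by the identity $H(\nu_k|\rho) = H(\nu_k) + \log q$ and $\sum_k \gamma_k = 1$ equals the $I$ of \eqref{rate_ind}. Lower semicontinuity and compactness of level sets of $I$ follow from the same properties of each $H(\,\cdot\,|\rho)$ on the compact simplex, so $I$ is indeed a good rate function.

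The only mildly subtle point is the tensorization step: one must verify that the topology on $M(s\times q)$ is the product topology inherited from the row spaces (which is immediate since $M(s\times q) \cong (\IR^q)^s$), and that the single-block LDPs hold on all of $\IR^q$ (not just the simplex), which is handled by the convention $H(\nu|\rho) = +\infty$ outside the simplex. Beyond that, every step is bookkeeping, and no analytic difficulty arises.
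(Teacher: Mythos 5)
Your proposal is correct and follows essentially the same route the paper takes: Sanov's theorem for each row, the $|S_k|/N \to \gamma_k$ rescaling of speeds, and tensorization of the independent row LDPs to pass from rectangles to general Borel sets. You simply supply the standard references (the Dembo--Zeitouni tensorization exercise and the product-topology identification of $M(s\times q)$ with $(\IR^q)^s$) that the paper leaves implicit, and you explicitly check that $I$ is a good rate function; otherwise the argument is the same.
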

Proposition \ref{prop ldp ind} together with the representation of our Hamiltonian in terms of the
matrix $M_N$ \eqref{eqn:HamiltonianAsScalarProduct} immediately yields an LDP for $M_N$ under the
Gibbs measure $\mu_{N}$.
\begin{thm}\label{theo LDP gibbs}
Under the Gibbs measure $\mu_{\alpha,\beta, N}$ the matrix valued random variable $M_N$ obeys an LDP
with speed $N$ and rate function $J = J_{\alpha,\beta}$
\begin{align}\label{rate gibbs}
\begin{split}
J(\nu) \coloneqq{}&-\big[\frac{1}{2}\skal{\Gamma  \nu, \Gamma  \nu}_{A}-I(\nu)\big]
+\sup_\mu\big[\frac{1}{2}\skal{\Gamma  \mu, \Gamma \mu}_{A}-I(\mu)\big]\\
={}&-\Big[ \frac{\beta}{2} \sum_{c = 1}^q \sum_{k=1}^s \gamma_k^2 \nu_{k,c}^2 +
\frac{\alpha}{2} \sum_{c = 1}^q \sum_{k \neq k'} \gamma_k \gamma_{k'} \nu_{k,c}\, \nu_{k',c}
-I(\nu)\Big]\\
&\qquad +
\sup_{\mu} \Big[ \frac{\beta}{2} \sum_{c = 1}^q \sum_{k=1}^s \gamma_k^2 \mu_{k,c}^2 +
\frac{\alpha}{2} \sum_{c = 1}^q \sum_{k \neq k'} \gamma_k \gamma_{k'} \mu_{k,c}\, \mu_{k',c}
-I(\mu)\Big].
\end{split}
\end{align}
Here $\Gamma$ is the $s\times s$ diagonal matrix with $(\Gamma)_{kk}=\gamma_k$  and
$\nu:=(\nu_k)_k$ and $\mu:=(\mu_k)_k$ are $s\times
q$--matrices and the $\nu_k$ and $\mu_k$ are probabilities on $\{1, \ldots, q\}$, otherwise
$J(\nu)$ is defined to be $\infty$.
\end{thm}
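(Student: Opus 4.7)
The plan is to deduce the LDP for $M_N$ under $\mu_{\alpha,\beta,N}$ from the LDP under $\overline{\rho}_N$ (Proposition \ref{prop ldp ind}) by an exponential tilt argument, i.e.\ Varadhan's lemma. The crucial observation is that $\overline{\rho}_N$ coincides with the uniform probability measure on $\{1,\dots,q\}^S$, so the Radon--Nikodym derivative
$$
\frac{d\mu_N}{d\overline{\rho}_N}(\omega) \;=\; \frac{q^N\,e^{-H_N(\omega)}}{Z_N}
$$
depends on $\omega$ only through $M_N(\omega)$. By \eqref{eqn:HamiltonianAsScalarProduct} this density can be written as $e^{N F_N(M_N(\omega))}/\tilde Z_N$, where
$$
F_N(\nu) \;:=\; \tfrac{1}{2}\skal{\tfrac{\Gamma_N}{N}\nu,\,\tfrac{\Gamma_N}{N}\nu}_{A},
\qquad \tilde Z_N \;:=\; Z_N/q^N.
$$

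First, note that $I$ is infinite outside the compact set $K$ of $s\times q$ matrices whose rows lie in the $(q-1)$-simplex, and that on $K$ the family $\{F_N\}$ is uniformly bounded. Since $|S_k|/N \to \gamma_k$, the quadratic forms $F_N$ converge uniformly on $K$ to the continuous function $F(\nu):=\tfrac{1}{2}\skal{\Gamma\nu,\Gamma\nu}_{A}$. Applying Varadhan's lemma (see e.g.\ Theorem 4.3.1 in \cite{DZ10}) to $\overline{\rho}_N$ tilted by $e^{N F(M_N)}$ then yields an LDP for $M_N$ under $\mu_N$ with speed $N$ and rate function
$$
J(\nu) \;=\; I(\nu) - F(\nu) \,-\, \inf_\mu\bigl[I(\mu)-F(\mu)\bigr],
$$
the infimum term encoding the Laplace asymptotics $\frac{1}{N}\log \tilde Z_N \to \sup_\mu[F(\mu)-I(\mu)]$ for the partition function. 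Expanding $F$ in coordinates, and using that $A$ has diagonal entries $\beta$ and off-diagonal entries $\alpha$, reproduces the explicit second representation of $J$ in the statement.

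The only technical point, not really a difficulty but what deserves attention, is the passage from the $N$-dependent tilt $F_N$ to its limit $F$. Uniform convergence on the compact set $K$ gives $|F_N(\nu)-F(\nu)|=o(1)$ uniformly in $\nu\in K$, so the exponents $N F_N(M_N)$ and $N F(M_N)$ differ by $o(N)$ uniformly in $\omega$, and hence the two tilts produce the same LDP. This can be absorbed into a standard variant of Varadhan's lemma allowing uniformly convergent exponents, or checked directly via an $\vep$-net argument on $K$ for the upper and lower bounds. After this, the continuity of $F$, lower semicontinuity of $I$, and compactness of $K$ make the remaining bookkeeping routine, and the two expressions for $J(\nu)$ in the statement follow by direct inspection.
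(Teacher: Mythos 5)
Your proposal is correct and follows essentially the same route as the paper: express the Radon--Nikodym derivative $d\mu_N/d\overline{\rho}_N$ as $e^{NF_N(M_N)}/\tilde{Z}_N$ with $F_N$ a quadratic form in $M_N$, then combine Proposition \ref{prop ldp ind} with Varadhan's Lemma and the tilted LDP. The only difference is that you spell out explicitly the uniform convergence $F_N \to F$ on the compact effective domain of $I$, whereas the paper delegates this approximation to the version of the tilted LDP in \cite[Lemma 2.1]{KLSS19}; this is a cosmetic rather than substantive distinction.
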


\begin{proof}
Starting from \eqref{eqn:HamiltonianAsScalarProduct} we write
\[
H_N(\omega) = -\frac{N}{2} \skal{(N^{-1}\Gamma_N)  M, (N^{-1}\Gamma_N) M}_{A}
\]
and so the assumption that $|S_k|/N\to \gamma_k$ for each $k$, Proposition \ref{prop ldp ind},
together with Varadhan's Lemma (\cite[Theorem III.13]{dH00}) and the tilted LDP (\cite[Theorem
III.17]{dH00}) in the version of \cite[Lemma 2.1]{KLSS19} show the result.
\end{proof}
It may be more convenient to reformulate the LDP above in terms of an LDP for the matrix $M'_N$
with entries
$$
m'_{k,c}:=m'_{k,c}(\omega):=\frac 1 {N} \sum_{i \in S_k} \eins_{\omega_i=c},
$$
so that asymptotically, $M'_N\approx\Gamma M_N$. Then, of course, the relevant matrices are the matrices
$\nu':=\Gamma\nu=(\gamma_k\nu_{kc})_{kc}$ where $\nu$ are the matrices appearing in Theorem \ref{theo LDP gibbs}.
Using \eqref{rate_ind} and calculating
 \[
  \sum_{k = 1}^s \sum_{c = 1}^q \nu_{k,c}' \log \nu_{k,c}'=I(\nu)-\log q+H(\gamma)
 \]
where of course $\gamma=(\gamma_k)$, we can identify the new rate function.
Indeed, as the term $H(\gamma)-\log q$ is independent of $\nu'$, we can reformulate the LDP as follows.

\begin{thm}\label{theo LDP gibbs2}
Under the Gibbs measure $\mu_{\alpha,\beta,N}$ the matrix valued random variable $M'_N$ obeys an LDP
with speed $N$ and rate function
\[
  J'(\nu) = \begin{cases}
  - \big[ \frac{1}{2} \skal{\nu, \nu}_{A} - \sum_{k = 1}^s \sum_{c = 1}^q \nu_{k,c}
\log \nu_{k,c} \big] \\\qquad  +\sup_{\mu \in C(\gamma)} \Big[ \frac{1}{2} \skal{\mu,
\mu}_{A} - \sum_{k = 1}^s \sum_{c = 1}^q \mu_{k,c} \log \mu_{k,c}  \Big] & \nu \in
C(\gamma)\\
  \infty & \nu \notin C(\gamma)
  \end{cases}
\]
where $C(\gamma) = \{ \mu=(\mu_{kc}) \in M(s \times q) : \mu_{kc}\ge0\text{ and } \sum_{c = 1}^q
\mu_{k,c} = \gamma_k \text{ for all }k\}$.
\end{thm}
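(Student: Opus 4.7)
The plan is to transport the LDP of Theorem \ref{theo LDP gibbs} from $M_N$ to $M'_N$ by a linear change of variables. From the definitions, $(M'_N)_{k,c} = (\abs{S_k}/N)\, m_{k,c}$, i.e.\ $M'_N = (\Gamma_N/N) M_N$. Since $\abs{S_k}/N \to \gamma_k$ for every $k$ and the entries of $M_N$ lie in $[0,1]$, the difference $M'_N - \Gamma M_N$ tends to zero uniformly in $\omega$. Hence $M'_N$ and $\Gamma M_N$ are exponentially equivalent --- the gap is in fact purely deterministic and vanishing --- so they share the same LDP under $\mu_{\alpha,\beta,N}$.

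Next, I would apply the contraction principle with the continuous linear map $T: \nu \mapsto \Gamma \nu$ on $M(s\times q)$. Since all $\gamma_k>0$, $T$ is a bijection, and in particular it maps the effective domain of $J$, namely the product of simplices $\{\nu : \nu_k \in \Delta_q \text{ for every } k\}$, bijectively onto $C(\gamma)$. Consequently the transported rate function is $J'(\nu') = J(T^{-1}\nu')$ on $C(\gamma)$ and $+\infty$ otherwise, and by the previous paragraph this is also the rate function for $M'_N$.

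It remains to rewrite $J(T^{-1}\nu')$ in terms of $\nu'$. The quadratic contribution transforms trivially, $\skal{\Gamma\nu,\Gamma\nu}_A = \skal{\nu',\nu'}_A$, while the entropic contribution transforms exactly as in the short computation already presented just before the statement: $I(\nu) = \sum_{k,c}\nu'_{k,c}\log\nu'_{k,c} + \log q - H(\gamma)$. Substituting both into the two-bracket expression for $J$, the additive constants $\log q - H(\gamma)$ appear identically inside the first bracket and inside the supremum, so they cancel in the difference, leaving precisely the formula for $J'$ stated in the theorem.

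I do not anticipate a genuine obstacle: the whole argument is an algebraic change of variable on top of Theorem \ref{theo LDP gibbs}. The only mild technical point is that the rescaling matrix $\Gamma_N/N$ depends on $N$, so one cannot invoke the contraction principle directly for $M_N \mapsto M'_N$; this is handled once and for all by the exponential-equivalence remark in the first paragraph.
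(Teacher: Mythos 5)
Your proof is correct and follows essentially the same route as the paper: a linear change of variables $\nu' = \Gamma\nu$ applied to Theorem \ref{theo LDP gibbs}, with the same bookkeeping on the entropy term and cancellation of the constant $\log q - H(\gamma)$. The paper simply writes ``$M'_N \approx \Gamma M_N$'' and states the reformulation; your version makes explicit the two technical ingredients the paper leaves implicit, namely the exponential equivalence of $M'_N$ and $\Gamma M_N$ (to deal with the $N$-dependent scaling $\Gamma_N/N$) and the contraction principle along the bijection $T = \Gamma$.
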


Note that every matrix $\mu\in C(\gamma)$ can actually be considered as a probability distribution
on $[sq]$ and the term $-\sum_{k \in [s]} \sum_{c \in [q]} \mu_{k,c} \log(\mu_{k,c})$ is its
entropy. However, the set $C(\gamma)$ places restrictions on the mass that can be placed on every
block.

\section{Equilibria for uniform block sizes}

We will now try to find the limit distributions of the matrix valued random variables $M_N$ and $M_N'$ under the sequence of Gibbs measure $\mu_{\alpha,\beta,N}$.
An LDP as in Theorem \ref{theo LDP gibbs} or Theorem \ref{theo LDP gibbs2} is, in principle, of
course able to determine these limit distributions. 
Indeed, they are given by the minima of the
corresponding rate functions.

\begin{cor}\label{cor:equal block sizes limit}
The weak limit points of the sequence $(M_N)$
under the sequence of Gibbs measures $\mu_{N,\alpha,\beta}$ are given by the minima of $J(\cdot)$ and
the weak limit points of the sequence $(M'_N)$
under the sequence of Gibbs measures $\mu_{N,\alpha,\beta}$ are given by the minima of $J'(\cdot)$
\end{cor}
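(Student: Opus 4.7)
The plan is to derive Corollary \ref{cor:equal block sizes limit} as a standard consequence of the LDPs established in Theorem \ref{theo LDP gibbs} and Theorem \ref{theo LDP gibbs2}, using only that $M_N$ (respectively $M'_N$) takes values in a compact set and that the relevant rate function is nonnegative with compact level sets. I will focus on the statement for $M_N$; the argument for $M'_N$ is formally identical, with $J$ replaced by $J'$ and with the ambient compact set replaced by $C(\gamma)$.

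First I would record the easy structural facts. Since each row of $M_N$ is a probability vector, $M_N$ takes values in the compact set $\mathcal{K} := \{\nu \in M(s \times q) : \nu_{k,c} \geq 0,\ \sum_{c=1}^q \nu_{k,c} = 1 \text{ for all } k\}$, so the sequence of push-forward laws $\mu_N \circ M_N^{-1}$ is automatically tight and admits subsequential weak limits. Reading off the variational formula
\[
J(\nu) = -\Big[\tfrac{1}{2}\skal{\Gamma\nu,\Gamma\nu}_{A} - I(\nu)\Big] + \sup_{\mu}\Big[\tfrac{1}{2}\skal{\Gamma\mu,\Gamma\mu}_{A} - I(\mu)\Big]
\]
from Theorem \ref{theo LDP gibbs}, one sees immediately that $J \geq 0$ with $\inf J = 0$, the zero set $K := \{\nu : J(\nu) = 0\}$ consisting exactly of the maximizers of the bracketed functional. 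By the lower semi-continuity and the compactness of level sets of $J$ (both part of the definition of a rate function), $K$ is nonempty and compact.

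Next I would run the standard exponential concentration argument. Fix any open neighborhood $U \supset K$. Since $U^{c} \cap \mathcal{K}$ is compact and disjoint from $K$, lower semi-continuity of $J$ gives $\delta := \inf_{U^{c} \cap \mathcal{K}} J > 0$. Applying the LDP upper bound from Theorem \ref{theo LDP gibbs} to the closed set $U^{c}$ yields
\[
\limsup_{N \to \infty} \frac{1}{N} \log \mu_N(M_N \in U^{c}) \;\leq\; -\delta \;<\; 0,
\]
so $\mu_N(M_N \in U^{c}) \to 0$ exponentially fast. Because $U$ was an arbitrary open neighborhood of $K$, every subsequential weak limit of the laws of $M_N$ must assign full mass to $K$; that is, all weak limit points are probability measures supported on the minimizers of $J$, which is the content of the corollary. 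The statement for $M'_N$ follows verbatim, with $\mathcal{K}$ replaced by $C(\gamma)$.

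I do not anticipate any genuine obstacle here: the argument is essentially a textbook Laplace-type consequence of the LDP, and the only items to check are the nonnegativity of $J$ and the compactness of the state space, both of which are immediate. The real analytic work of the section lies not in the corollary itself but in the subsequent explicit identification of the minimizers of $J$ (and hence the phase transition); the corollary is stated precisely to reduce that problem to a deterministic variational one.
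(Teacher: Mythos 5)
Your proof is correct and follows essentially the same route the paper sketches: the paper's one-line justification invokes the folklore fact that the LDP upper bound forces any closed set whose closure misses the rate-function minimizers to have vanishing probability, and your argument is just the standard fleshing-out of exactly that observation via compactness, lower semi-continuity, and the exponential estimate on $U^c$.
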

\begin{proof}
This is actually folklore in large deviation theory and not difficult to prove, when realizing that
the upper bound in an LDP implies that any measurable subset of $\R$ whose closure does not contain
a minimum of the rate function has a probability that converges to $0$.
\end{proof}

We will, in the sequel, determine the minima of $J'$. From here, of course, it is also obvious, what the minima of $J$ are.
We start with the observation that
the minimum points of $J'$ are the maximum points of
\begin{align}\label{eq:defG}
G(\mu)\coloneqq{}& \frac{1}{2} \skal{\mu,
\mu}_{A} - \sum_{k = 1}^s \sum_{c = 1}^q \mu_{k,c} \log \mu_{k,c}\nonumber \\
={}& \sum_{c=1}^q \sum_{k=1}^s  \frac{\beta}{2} \mu_{kc}^2 + \sum_{c=1}^q\sum_{
 k' \neq k}^s\frac{\alpha}{2} \mu_{kc}\mu_{k'c}
- \sum_{c=1}^q\sum_{k=1}^s  \mu_{kc} \log \mu_{kc} ,
\end{align}
where $\mu=(\mu_{kc})\in C(\gamma)$ and we have set $0 \log 0:= 0$.

\begin{lemma}\label{strictpos}
$G$ attains its maximum on the set
$$C^+(\gamma):= \{\mu \in C(\gamma): 0 < \mu_{kc}<1 \mbox{ for all } 1\le k \le s, 1 \le c \le
q\}.$$
\end{lemma}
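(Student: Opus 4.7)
The plan is to show existence of a maximizer on the compact set $C(\gamma)$ first and then rule out boundary maximizers using the familiar fact that $-x\log x$ has infinite derivative at $0^+$.

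First I would observe that $C(\gamma)$ is a closed and bounded (hence compact) subset of $M(s\times q)$, and that $G$ as defined in \eqref{eq:defG} is continuous on $C(\gamma)$ when we adopt $0\log 0 = 0$. Therefore a maximizer $\mu^* \in C(\gamma)$ exists. Also, since $\sum_c \mu^*_{k,c} = \gamma_k < 1$ with nonnegative entries, automatically $\mu^*_{k,c} \le \gamma_k < 1$, so the upper-bound part of the claim is free.

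The real content is ruling out entries equal to $0$. Suppose for contradiction that $\mu^*_{k_0,c_0} = 0$ for some pair $(k_0,c_0)$. Since $\sum_{c=1}^q \mu^*_{k_0,c} = \gamma_{k_0} > 0$, there exists some $c_1 \neq c_0$ with $y := \mu^*_{k_0,c_1} > 0$. For small $\epsilon \in (0,y)$ define a perturbed matrix $\mu^\epsilon \in C(\gamma)$ by setting $\mu^\epsilon_{k_0,c_0} := \epsilon$ and $\mu^\epsilon_{k_0,c_1} := y - \epsilon$ and leaving all other entries unchanged. This stays in $C(\gamma)$ because the row-sum constraint for row $k_0$ is preserved and all other rows are untouched.

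Now I would compute $G(\mu^\epsilon) - G(\mu^*)$ as $\epsilon \downarrow 0$. The quadratic form $\frac{1}{2}\skal{\mu,\mu}_A$ is a smooth (polynomial) function of the entries, so its contribution to the difference is $O(\epsilon)$ with bounded derivative at $\epsilon = 0$. The entropy contribution, however, is
\[
-\epsilon\log\epsilon - (y-\epsilon)\log(y-\epsilon) + y\log y,
\]
whose derivative with respect to $\epsilon$ is $\log(y-\epsilon) - \log\epsilon$, which tends to $+\infty$ as $\epsilon \downarrow 0$. Consequently $\frac{d}{d\epsilon}G(\mu^\epsilon)\big|_{\epsilon=0^+} = +\infty$, so for sufficiently small $\epsilon > 0$ we have $G(\mu^\epsilon) > G(\mu^*)$, contradicting the maximality of $\mu^*$. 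Hence every maximizer lies in $C^+(\gamma)$.

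There is no real obstacle here; the only thing to be slightly careful about is that the perturbation stays in $C(\gamma)$ (a local row-preserving exchange takes care of this) and that the quadratic term's derivative is indeed finite at the boundary point, which follows because the relevant partial derivative is an affine function of the $\mu_{k,c}$'s and so bounded on the compact set $C(\gamma)$.
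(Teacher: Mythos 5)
Your proof is correct and uses essentially the same idea as the paper: perturb a boundary point by moving mass $\epsilon$ from a positive entry to the zero entry within a row, and use that $-t\log t$ has infinite derivative at $0^+$ while the quadratic term has bounded one-sided derivative. The only cosmetic difference is that you pick any $c_1$ with $\mu^*_{k_0,c_1}>0$, whereas the paper uses pigeonhole to pick an entry $\ge \gamma_1/(q-1)$; both work, and you also make the compactness/existence step and the upper bound $\mu_{kc}<1$ explicit, which the paper leaves implicit.
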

\begin{proof}
Suppose one of $\mu$'s entries equals $0$, without loss of generality $\mu_{11}=0$. Then, there is
$2 \le i \le q$ such that $\mu_{1i} \ge \gamma_1/(q-1)$. Note that $G$ is the sum of a polynomial
of degree two in the $\mu_{kc}$'s and $-\sum_{k=1}^s \sum_{c=1}^q \mu_{kc} \log \mu_{kc}$. Now $-t
\log t$ has derivative infinity at $0$. Hence, for $\vep>0$ small enough, we have $G(\mu)< G(\mu')$
where $\mu'$ is the matrix that we obtain from $\mu$, if we replace $\mu_{11}$ by $\mu_{11}'=\vep$,
$\mu_{1i}$ by $\mu_{1i}'=\mu_{1i}-\vep$ and leave the other entries unaltered. This shows the claim.
\end{proof}

Let us now apply the method of Lagrange multipliers to find the maximum points of $G$. Let
$\lambda=(\lambda_1, \lambda_2, \ldots, \lambda_s)$. We then need to find the critical points of
$$
L(\mu,\lambda)= G(\mu)-\sum_{k=1}^s \lambda_k \left(\sum_{c=1}^q \mu_{kc}-\gamma_k\right).
$$
Differentiating with respect to the $\mu_{kc}$, $1 \le k \le s$, $1 \le c \le q$ gives the
following set of equations
\begin{equation}\label{eq:partial}
0=\partial_{\mu_{kc}} G(\mu)-\lambda_k= \beta \mu_{kc}+\alpha\sum_{
\substack{k'=1\\ k' \neq k}}^s \mu_{k'c}-
\log \mu_{kc} -1 -\lambda_k.
\end{equation}
Summing these equations over all $c$ yields
$$
q(1+\lambda_k)=\beta\gamma_k+\alpha\sum_{\substack{k'=1\\k'\neq k}}^s
\gamma_{k'}-\sum_c \log\mu_{kc},
$$
and plugging this into \eqref{eq:partial} we finally arrive at our system of critical equations
\begin{equation}\label{gls}
 \beta \Big(\mu_{kc}-\frac{\gamma_k}{q}\Big)+\alpha \sum_{\substack{k'=1\\k'\ne k}}^s
\Big(\mu_{k'c}-\frac{\gamma_{k'}}{q}\Big)
=\log \frac{\mu_{kc}}{\sqrt[q]{\prod_d \mu_{kd}}}.
\end{equation}
that any maximum point has to solve. Let us rephrase the value of $G$ in critical points
by multiplying \eqref{gls} by $\mu_{kc}$ and summing over $c$:
$$
\sum_c (\beta \mu_{kc}^2+\alpha
\sum_{\substack{k'=1\\k'\ne k}}^s \mu_{kc}\mu_{k'c}-2\mu_{kc}\log\mu_{kc})=
\frac{\gamma_k}{q}(\beta\gamma_k+\alpha(1-\gamma_k))-\sum_c\Big(\frac{\gamma_k}{q}+
\mu_{kc}\Big)\log\mu_{kc}.
$$
Hence, if $\mu^{\mathrm{crit}}$ is a critical point of $G$, we can write $G(\mu^{\mathrm{crit}})$
as
\begin{equation}\label{eq:Gcrit}
G(\mu^{\mathrm{crit}})=
\frac{1}{2q}((\beta-\alpha)\|\gamma\|^2+\alpha)-\frac{1}{2q}\sum_{k=1}^s\sum_{c=1}^q
 (\gamma_k+q\mu^{\mathrm{crit}}_{kc})\log \mu^{\mathrm{crit}}_{kc}.
\end{equation}

Next we will see that of all critical points only those where all rows of $\mu$ have the same,
e.\,g.\
an increasing order, are relevant.

\begin{lemma}\label{genumord}
For $\mu\in C^+(\gamma)$ with increasing rows $\mu_{k1}\le\ldots\le \mu_{kq}$,
$1\le k\le s$, we have
\begin{equation}\label{umordnung}
 \sum_{k'\ne k}\sum_{c=1}^q \mu_{kc}\mu_{k'c}\ge \sum_{k'\ne k}\sum_{c=1}^q
 \mu_{k \sigma_k(c)}\mu_{ k'\sigma_{k'}(c)}
\end{equation}
for all $s$--tuples $(\sigma_k)_k$ of permutations $\sigma_k\in S_q$.

In particular, for $\alpha>0$, the function $G$ can only be maximal in a point
$\mu'$, if the rows of $\mu'$ are ordered in the same way, i.\,e.\ if there is a $\sigma\in S_q$
such
that
$$\mu'_{k\sigma(1)}\le\ldots\le \mu'_{k\sigma(q)}$$ for all
$1\le k\le s$.
\end{lemma}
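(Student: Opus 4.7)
The plan is to prove the two assertions separately, reducing the first to the classical rearrangement inequality and the second to a strict refinement of it.

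For the rearrangement claim I would fix a pair $k\ne k'$ and, via the substitution $d=\sigma_{k'}(c)$, rewrite the right-hand side summand as $\sum_d\mu_{k,\sigma_k\circ\sigma_{k'}^{-1}(d)}\mu_{k'd}$. Since both rows $\mu_{k,\cdot}$ and $\mu_{k',\cdot}$ are already increasing, the classical rearrangement inequality bounds this by $\sum_d \mu_{kd}\mu_{k'd}$; summing the resulting pairwise inequalities over $k'\ne k$ yields \eqref{umordnung}.

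For the second assertion the key observation is that permuting entries within a single row leaves both the diagonal quadratic $\sum_{k,c}\mu_{kc}^2$ and the entropy term $\sum_{k,c}\mu_{kc}\log\mu_{kc}$ appearing in $G$ untouched, so only the $\alpha$-term is sensitive to row permutations. Let $\mu'$ be a hypothetical maximizer of $G$ (automatically in $C^+(\gamma)$ by Lemma~\ref{strictpos}) and let $\tilde\mu$ be the matrix obtained by sorting each row of $\mu'$ increasingly. Part~1 then gives the weak inequality $G(\tilde\mu)\ge G(\mu')$, and the plan is to reach a contradiction with the maximality of $\mu'$ by showing strict inequality whenever the rows of $\mu'$ fail to be simultaneously sortable by a common $\sigma\in S_q$.

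To extract such strictness I would introduce the transitive relation on columns defined by $c\preceq c'\iff \mu'_{kc}\le \mu'_{kc'}$ for all $k$: if $\preceq$ were total, any linear extension would provide the desired common $\sigma$, so its failure must produce two columns $c_1,c_2$ and two rows $k_0,k_0'$ exhibiting a \emph{strict crossing}, $\mu'_{k_0c_1}<\mu'_{k_0c_2}$ and $\mu'_{k_0'c_1}>\mu'_{k_0'c_2}$. For this pair the rearrangement inequality is strict: transposing the entries at $c_1,c_2$ in row $k_0'$ alone already increases $\sum_c\mu'_{k_0c}\mu'_{k_0'c}$ by the strictly positive quantity $(\mu'_{k_0c_1}-\mu'_{k_0c_2})(\mu'_{k_0'c_2}-\mu'_{k_0'c_1})$, so the current arrangement is not maximal among row-wise permutations, while the remaining pair contributions only weakly increase under sorting. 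Summing over pairs $(k,k')$ then yields $G(\tilde\mu)>G(\mu')$, a contradiction. The main obstacle in my view is precisely this strictness question in the presence of possible ties among the entries, which the partial-order argument handles by singling out at least one pair of rows exhibiting a genuine crossing.
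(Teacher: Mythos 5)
Your proof is correct and follows essentially the same route as the paper: reduce to the classical (Hardy--Littlewood--P\'olya) rearrangement inequality for the first claim (via the substitution collapsing the two permutations $\sigma_k,\sigma_{k'}$ into one), and obtain strictness for the second claim from a pair of rows exhibiting a strict crossing. The paper is terser --- it directly quotes the strictness clause of the rearrangement inequality and states the contrapositive, whereas you make the crossing explicit via the column preorder and a transposition argument --- but these are only expository differences.
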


\begin{proof}
Recall the rearrangement inequality \cite{inequalities}, Theorem 368: When $x_1 \le \ldots \le x_n$
and  $y_1 \le \ldots \le y_n$ are sequences of real numbers, then for every permutation $\pi \in
S_n$ one has
$$
\sum_{i=1}^nx_i y_{\pi(i)}\le x_1 y_1+ x_2 y_2+\ldots x_n y_n
$$
and the inequality is strict if there are indices $j<j'$ with $x_j<x_{j'}$ and
$y_{\pi(j)}>y_{\pi(j')}$.

Applying this to row $k$ and $k'$ of $\mu$ gives
\[
 \sum_{c=1}^q\mu_{kc}\mu_{k'c}\ge\sum_{c=1}^q\mu_{k\sigma_k(c)}\mu_{k'
 \sigma_{k'}(c)}
\]
for every two permutations $\sigma_k,\sigma_{k'} \in S_q$.
Summing over all $k'\neq k$ yields \eqref{umordnung}.

In particular, $\mu'$ is not a maximum point, if we have two rows $k\ne k'$ and
indices $j<j'$ with $\mu'_{kj}<\mu'_{kj'}$ and $\mu'_{k'j}>\mu'_{k'j'}$.
\end{proof}

So we can and will assume in the following, that all rows of a critical
point $\mu$ of $G$ are increasing. The next lemma determines the structure of a critical $\mu$.

\begin{lemma}\label{lem:same columns}
Let $\mu$ be a critical point of $G$.
\begin{enumerate}
 \item If $\mu_{kc}=\mu_{kc'}$ for a $k$ and $c\ne c'$ then
$\mu_{k'c}=\mu_{k'c'} $ for all $1\le k'\le s$.
\item Each row of $\mu$ has at most two different entries.
\end{enumerate}
\end{lemma}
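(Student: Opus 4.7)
The plan is to prove (1) via a column-swap argument exploiting the critical equation \eqref{gls}, and to deduce (2) by combining (1) with a row-symmetrization that reduces the equilibrium equations to a scalar fixed-point problem.

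For part (1), I would subtract the critical equation \eqref{gls} at $(k,c)$ from the one at $(k,c')$ in the same row. The assumption $\mu_{kc}=\mu_{kc'}$ kills both the logarithmic ratio on the right and the $(\beta-\alpha)$-term on the left, leaving $\alpha\sum_{k''\neq k}(\mu_{k''c}-\mu_{k''c'})=0$; equivalently, the column sums $S_c:=\sum_{k''}\mu_{k''c}$ and $S_{c'}$ coincide. For any fixed row $k'$, let $\tilde\mu\in C(\gamma)$ be obtained from $\mu$ by swapping $\mu_{k'c}\leftrightarrow\mu_{k'c'}$ in row $k'$. The within-row quadratic and the entropy contributions to $G$ are invariant under this swap, and a direct expansion of the cross-block $\alpha$-term yields
\[
G(\tilde\mu)-G(\mu)=\alpha(\mu_{k'c}-\mu_{k'c'})^2-\alpha(\mu_{k'c}-\mu_{k'c'})(S_c-S_{c'})=\alpha(\mu_{k'c}-\mu_{k'c'})^2\ge 0,
\]
using $S_c=S_{c'}$. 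Since the critical points relevant for Corollary~\ref{cor:equal block sizes limit} are maximizers of $G$ (which have ordered rows by Lemma \ref{genumord}), we must have $G(\tilde\mu)\le G(\mu)$, which forces $\mu_{k'c}=\mu_{k'c'}$.

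For part (2), statement (1) implies that two columns agreeing in one row agree in every row, so the columns of $\mu$ partition into equivalence classes, and the number of distinct row entries equals the number of classes; it therefore suffices to show there are at most two classes. Under the uniform block-size assumption of Section~4, $G$ is invariant under permutations of the rows, so a maximizer may be chosen row-invariant, $\mu_{kc}=v_c$ for every $k$. Substituting this ansatz into \eqref{gls} collapses the equilibrium system to the scalar fixed-point equation $v_c = R\exp((\beta+(s-1)\alpha)v_c)$, i.e.\ $x=Re^{\lambda x}$ with $\lambda:=\beta+(s-1)\alpha>0$. Since the right-hand side is strictly convex, this equation has at most two positive solutions, hence $v_c$ takes at most two distinct values.

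The principal obstacle is the row-symmetrization step: since $G$ is not globally concave, one cannot simply average $\mu$ over its row permutations and land on a maximum. I would address this by combining the ordering restriction from Lemma \ref{genumord} with the strict concavity of the entropy contribution in directions transverse to row permutations; alternatively, a direct second-order variational analysis on mass-transfer perturbations among three hypothetical equivalence classes would work, the principal technical difficulty being the bookkeeping with unequal class sizes $q_i$.
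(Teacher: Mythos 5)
Your argument for part~(1) is a valid alternative to the paper's. The paper subtracts the two critical equations to get $\sum_{k'\ne k}\mu_{k'c}=\sum_{k'\ne k}\mu_{k'c'}$ and then concludes directly from the increasing-order convention (each summand $\mu_{k'c}\le\mu_{k'c'}$ when $c<c'$, so equal sums force term-by-term equality). Your column-swap computation reaches the same conclusion but additionally invokes maximality; that is fine in context, but note the paper's version works for any critical point with increasing rows, which is the cleaner hypothesis given that the lemma is stated for critical points.

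For part~(2), however, there is a genuine gap, and it is the one you yourself flag. You want to pass to the row-symmetric ansatz $\mu_{kc}=v_c$, but there is no justification offered: $G$ is not concave, so averaging over row permutations does not push you toward a maximizer, and ``strict concavity of the entropy in directions transverse to row permutations'' is not a proof. Worse, the logical structure of the paper makes this route circular: the fact that the maximizer has identical rows is established only in the \emph{subsequent} unnamed proposition (``If a critical $\mu$ with $G(Q)\le G(\mu)$ does not have identical rows then $\mu$ is not a maximum point''), and that proposition's proof explicitly uses the two-column structure from Lemma~\ref{lem:same columns}. You cannot assume its conclusion here. Moreover, your approach would only cover the uniform case $\gamma_k=1/s$, whereas Lemma~\ref{lem:same columns} is stated for general $\gamma$.

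The paper's actual argument for (2) is elementary and avoids the ansatz altogether: writing the difference of the critical equations at $(k,c')$ and $(k,c)$, dividing by $\alpha\sum_{k'}(\mu_{k'c'}-\mu_{k'c})$, and summing over $k$ yields
\[
\frac{1}{\alpha}=\sum_{k=1}^s\Bigl(\frac{\log\mu_{kc'}-\log\mu_{kc}}{\mu_{kc'}-\mu_{kc}}-(\beta-\alpha)\Bigr)^{-1},
\]
with each summand automatically positive. If there were three strictly increasing entries $\mu_{kc}<\mu_{kc'}<\mu_{kc''}$, the analogous identity for the pair $(c,c'')$ would have each summand strictly larger, since for the concave function $\log$ the secant slope over $[\mu_{kc},\mu_{kc''}]$ is strictly smaller than over $[\mu_{kc},\mu_{kc'}]$. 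Both sums cannot equal $1/\alpha$, a contradiction. This works for general $\gamma$ and for any critical point with increasing rows, not just maximizers. You should replace your part~(2) with this argument or something equivalent that does not presuppose identical rows.
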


\begin{proof}
Substracting \eqref{gls} for $c$ from the equation for $c'$ yields
$$
\alpha\sum_{k'\ne k}\mu_{k'c}=\alpha\sum_{k'\ne k}\mu_{k'c'}
$$
and thus, by increasing order, $\mu_{k'c}=\mu_{k'c'}$ for all $1\le k'\le s$.
This is the first claim.

For two different columns $c\ne c'$ we obtain from \eqref{gls}
\[
(\beta-\alpha)(\mu_{kc'}-\mu_{kc})+\alpha\sum^s_{k'=1}(\mu_{k'c'}-\mu_{k'c})=
\log \mu_{kc'}-\log \mu_{kc}.
\]
Now if we had three columns $c<c'<c''$ with $\mu_{kc}<\mu_{kc'}<\mu_{kc''}$ for one row $k$
(and hence for all due to the first part of this lemma) we would have
\begin{equation}\label{eq:after_sum}
\frac{1}{\alpha}=\sum_{k=1}^s\frac{\mu_{kc'}-\mu_{kc}}{\alpha\sum^s_{k'=1}(\mu_{k'c'}-\mu_{k'c})}=
\sum_{k=1}^s\Big(  \frac{\log  \mu_{kc'}-\log \mu_{kc}}{\mu_{kc'}-\mu_{kc}}-(\beta-\alpha)
\Big)^{-1}
\end{equation}
and the same equation for the pair $c, c''$.
However, every summand on the right of that latter equation would be larger (and positive)
than the corresponding summand in \eqref{eq:after_sum} by concavity of the logarithm.
Hence we have a contradiction.
\end{proof}

So far we have proved, that according to Lemma \ref{strictpos}, Lemma \ref{genumord} and Lemma
\ref{lem:same columns} we can constrain our search for maximum
points of $G$ to matrices $\mu$ with positive
entries, increasingly ordered rows and having at most two different columns. Taking into account
that the entries in row $k$ sum up to $\gamma_k$ we see that the largest column $\mu^+$ (component
by component) of $\mu$
together
with the number $1\le r\le q$ of columns equal to $\mu^+$ is all the information we need to build
up $\mu$.
So either $\mu$ has $q$ identical columns $\gamma/q$ or (the increasingly ordered) $\mu$ reads
\[
 \mu=(\underbrace{\mu^-\ldots\mu^-}_{q-r
 }\underbrace{\mu^+\ldots \mu^+}_{r})
\]
with $\mu^-=(\gamma-r\mu^+)/(q-r)$ for some $1\le r\le q-1$ and $\mu^+_k>\gamma_k/q>\mu^-_k$ for
all $k$.

\medskip

At this stage the authors do not know a way to proceed with the case of an arbitrary vector
$\gamma$. However, a proof for the case of asymptotically equal block sizes $\gamma_k=1/s$ for all
$1\le k\le s$ is readily accomplished.

\begin{proposition}
 Let $\gamma_k=1/s$ for all $1\le k\le s$ and $Q\in M(s\times q)$ with identical entries $1/sq$.
If a critical $\mu$ with $G(Q)\le G(\mu)$ does not have identical rows then $\mu$ is not a maximum
point.
\end{proposition}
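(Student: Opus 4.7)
By Lemmas \ref{strictpos}, \ref{genumord} and \ref{lem:same columns}, any critical point $\mu \in C^+(\gamma)$ has the following form: the columns split into two sets $C^-$ and $C^+$ of sizes $q-r$ and $r$ (for some $1 \le r \le q-1$), and in every row $k$ the entries equal $\mu_k^-$ on $C^-$ and $\mu_k^+$ on $C^+$, subject to $r\mu_k^+ + (q-r)\mu_k^- = 1/s$. Parametrize row $k$ by $x_k := \mu_k^+ - \mu_k^- \in [0,1/(rs))$, so that $\mu_k^+ = \frac{1}{sq} + \frac{q-r}{q}x_k$ and $\mu_k^- = \frac{1}{sq} - \frac{r}{q}x_k$. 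A short calculation reduces \eqref{gls} to
\[
g(x_k) = -\alpha X \quad\text{for every } k, \qquad X := \sum_{k=1}^s x_k,\qquad g(x) := (\beta-\alpha)x - \log\frac{\mu^+(x)}{\mu^-(x)}.
\]
The hypothesis that the rows of $\mu$ are not identical means exactly that the $x_k$ are not all equal.

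The key structural observation is that the column sums $\sum_k \mu_{kc}$ depend only on $X$, so the off-diagonal contribution $\frac{\alpha}{2}\sum_c (\sum_k \mu_{kc})^2$ to $G(\mu)$ is a function of $X$ alone. A direct computation then yields the decomposition
\[
G(\mu) = C(X) + \sum_{k=1}^{s} P(x_k), \qquad P(x) := \frac{(\beta-\alpha)r(q-r)}{2q}x^2 - h(x),
\]
with $h(x) := r\mu^+(x)\log\mu^+(x) + (q-r)\mu^-(x)\log\mu^-(x)$; differentiation gives $P'(x) = \frac{r(q-r)}{q} g(x)$, so the critical equation is equivalent to $P'(x_k)$ being the same constant in $k$. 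I would then introduce the comparison matrix $\mu^* \in C(\gamma)$ with identical rows carrying the averaged parameter $x^* := X/s$; this is the row-average of $\mu$ and preserves $X$. We obtain
\[
G(\mu^*) - G(\mu) = s P(X/s) - \sum_{k=1}^{s} P(x_k),
\]
and it suffices to show that this is strictly positive.

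This inequality is a strict Jensen inequality for $P$, which would follow from strict concavity of $P$ on $[\min_k x_k,\max_k x_k]$. Since $P''(x) = \frac{r(q-r)}{q} g'(x)$, this amounts to $g' < 0$ on that interval. The main obstacle is that two distinct $x_k \ne x_{k'}$ solving $g(\cdot) = -\alpha X$ force, by Rolle's theorem, $g'$ to vanish somewhere between them, so uniform concavity of $P$ across the support of $\{x_k\}$ cannot hold in general. To close this gap I would bring in the hypothesis $G(\mu) \ge G(Q) = C(0) + sP(0)$: rewritten as $\sum_k[P(x_k)-P(0)] \ge C(0)-C(X)$, it localises the $x_k$ to a specific part of the admissible range. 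Combined with the unimodality of $g$ (derivable from the sign of $g''(x) = [((q-r)/\mu^+(x))^2 - (r/\mu^-(x))^2]/q^2$), this should force all $x_k$ onto the decreasing branch of $g$ past its maximum, where $g' < 0$ and the Jensen bound closes the argument. The hardest step is the sub-case $r < q/2$, where $g''$ changes sign so that $g$ is not automatically unimodal, and the multi-modal behaviour must be ruled out separately by using $G(\mu) \ge G(Q)$ to confine $X$ to a regime in which only two solutions of $g(x)=-\alpha X$ survive.
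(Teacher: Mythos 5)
Your proposal takes a genuinely different route from the paper's. The paper does \emph{not} compare $\mu$ with its row average. Instead, it exploits the representation \eqref{eq:Gcrit}, which expresses $G$ \emph{at a critical point} (up to constants) as $\sum_k w(\mu_k^+)$ for a single-variable function $w$ defined on $(0,1/(sr))$, and constructs a \emph{new critical point} $\nu$ with constant rows whose parameter $t_0$ satisfies $t_0 > \max_k \mu_k^+$. It then shows $G(\nu) = s w(t_0) > G(\mu)$ by a shape analysis of $w$ (saddle at $1/(sq)$, then decreasing, then increasing past a unique minimizer $\xi$), using $G(\mu)\ge G(Q)$ to place $\max_k \mu_k^+$ past $\xi$, where $w$ is increasing — a termwise comparison, not a Jensen bound. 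Your decomposition $G(\mu)=C(X)+\sum_k P(x_k)$, valid for \emph{all} (not just critical) two-column matrices with $X:=\sum_k x_k$ fixed under row-averaging, is a clean and correct observation and in principle gives a nice comparison matrix $\mu^*$; but the route is different.

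The gap, however, is real and you have identified it yourself without closing it. Your target $sP(X/s)>\sum_k P(x_k)$ is a strict Jensen inequality which would follow from strict concavity of $P$ on $[\min_k x_k,\max_k x_k]$; as you note, the critical equation $P'(x_k)=\text{const}$ for two distinct values forces $P''$ to vanish in between by Rolle, so uniform concavity cannot hold. The proposed repair is logically self-defeating: if $G(\mu)\ge G(Q)$ and unimodality of $g\propto P'$ could \emph{force all} $x_k$ onto the decreasing branch of $g$, then $g$ would be injective there and $g(x_k)=-\alpha X$ would give $x_1=\cdots=x_s$, contradicting the hypothesis that the rows are not identical. So if your localization step works, there is nothing left to prove and Jensen is irrelevant; and if some $x_k$ must straddle the maximum of $g$ (equivalently $P''$ changes sign in their hull), the chord inequality $sP(X/s)>n_aP(a)+n_bP(b)$ for the two solutions $a<b$ of the critical equation is precisely what is in doubt and must be proved directly — which is the hard content that the sketch defers. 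The paper sidesteps this entirely by not averaging at all: it moves every row up to $t_0>\max_k\mu_k^+$, a direction in which $w$ is monotone once $G(\mu)\ge G(Q)$ has been used, and it also treats separately the elementary case $q\le 2r$ where $w$ is globally increasing on $[1/(sq),1/(sr))$. As written, your argument is not a proof, and the remaining step is not a routine verification.
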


\begin{proof}
 Recall that the system of critical equations \eqref{gls} reads
 \[
  \alpha\sum_{k'=1}^s\Big(\mu^+_{k'}-\frac{1}{sq}\Big)=\frac{q-r}{q}\log \frac{s(q-r)\mu_k^+}{1-sr\mu^+_k}
  -(\beta-\alpha)\Big(\mu^+_k-\frac{1}{sq}\Big)
 \]
and regard the right hand side of this equation as a function of $\mu_k^+$, say $\psi(\mu_k^+)$.
 Now if the largest entry of the vector $\mu^+$ occurs in line $K$ (and perhaps somewhere else) but
not in every line, then
\[
 \psi(\mu^+_K)-\alpha\Big(s\mu^+_K-\frac{1}{q}\Big)<0.
\]
Since $\psi(t)$ diverges to $+\infty$ when $t$ approaches $1/sr$ from below we can find a
$t_0>\mu_K^+$ with $\psi(t_0)=\alpha(st_0-1/q)$. Now building a matrix $\nu$ by taking instead
of $r$ columns equal to $\mu^+$ just $r$ columns with identical entries $t_0$ and completing the
matrix with $q-r$ columns with identical entries $(1-srt_0)/(s(q-r))$ then clearly $\nu$ is (well
defined and) a critical point. So we just have to prove that $G(\mu)<G(\nu)$.

To that end observe that according to \eqref{eq:Gcrit} the value of $G$ in
critical points $p$ is given up to constants as $\sum_kw(p^+_k)$
 with $w:(0,1/(sr))\to\R$
 \[
  w(x)=-((q-r)+q(1-srx))\log\frac{1-srx}{s(q-r)}-r(1+sqx)\log x.
 \]
Calculating
 \[
  w'(x)=srq\log\frac{1-srx}{s(q-r)x}+r\frac{sqx-1}{x(1-srx)}
 \]
and
\[
 w''(x)=\frac{r(sqx-1)(2srx-1)}{x^2(srx-1)^2}
\]
we see that if $q>2r$ then the graph of $w$, coming from $+\infty$ at the vertical asymptotic line
$x=0$, has a saddle point at $x=1/sq$
changing from bending to the left to bending to the right. It decreases to the second inflection
point at $x=1/(2sr)$, passes, now bending to the left again, the unique minimum point at say
$\xi$ and disappears to $+\infty$ approaching the vertical line at $x=1/(sr)$.
In this case clearly $w(\mu^+_k)\le\max(w(\mu^+_K),w(1/sq))$ for all $k$ since
$1/sq<\mu^+_k\le\mu^+_K$. Now $w(\mu^+_K)<w(1/sq)$ would imply the contradiction
\[
G(\mu)=\sum_kw(\mu^+_k)<sw(1/sq)=G(Q)
\]
so we have $w(1/sq)\le w(\mu^+_K)$ and therefore $\mu^+_K>\xi$ which means
$w(t_0)>w(\mu^+_K)$ and so
\[
 G(\nu)=sw(t_0)>sw(\mu^+_K)\ge G(\mu).
\]
If $q\le 2r$ then $w$ is strictly increasing on $[1/sq,1/sr)$ so that
\[
 G(\nu)=sw(t_0)>\sum_kw(\mu^+_k)=G(\mu).
\]
\end{proof}

Wrapping up what we have seen, we state

\begin{proposition}\label{prop:wrappingup}
Let $\gamma\in \R^s$ have identical entries $1/s$.
The function $G=:G^{\mbox{bP}}$ in the block spin Potts model is maximal on
$C(\gamma)$ if its rows are identical and equal to a maximizer of the corresponding target function
\begin{equation}\label{eq:Potts}
 G^{\mbox{P}}(v)=\frac{1}{2s}(\beta+(s-1)\alpha)\sum_{c=1}^q v_c^2-\sum_{c=1}^qv_c\log v_c
\end{equation}
on the set $V:=\{v=(v_1,\ldots,v_q)\mid \sum_cv_c=1,\,v_c>0\}$ in the Potts model.
\end{proposition}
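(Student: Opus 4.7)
The plan is to combine the preceding lemmas and the previous Proposition with a parametrization of row-identical matrices to reduce the $s \times q$ optimization on $C(\gamma)$ to the Potts-type $q$-dimensional optimization on $V$, and then to identify the two objective functions by a direct substitution.

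First, I would argue that any maximizer $\mu^*$ of $G$ on the compact set $C(\gamma)$ must lie in $C^+(\gamma)$ by Lemma \ref{strictpos}, so $\mu^*$ is an interior point of the affine subspace cut out by the row-sum constraints $\sum_c \mu_{kc} = 1/s$ and is therefore a critical point of the Lagrangian, satisfying \eqref{gls}. Since the uniform matrix $Q$ lies in $C(\gamma)$, the inequality $G(\mu^*) \ge G(Q)$ holds and the hypothesis of the preceding Proposition is met, which forces every row of $\mu^*$ to be identical.

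Next, I would parametrize matrices in $C(\gamma)$ with identical rows by $\mu_{kc} = v_c/s$ with $v \in V$ — the row-sum condition $\sum_c \mu_{kc} = 1/s$ translates exactly to $\sum_c v_c = 1$ — and substitute into \eqref{eq:defG}. The key computation is
\begin{align*}
G(\mu) &= \frac{\beta}{2}\, s \sum_{c=1}^q \left(\frac{v_c}{s}\right)^{\!2} + \frac{\alpha}{2}\, s(s-1) \sum_{c=1}^q \left(\frac{v_c}{s}\right)^{\!2} - s \sum_{c=1}^q \frac{v_c}{s} \log \frac{v_c}{s} \\
&= \frac{\beta + (s-1)\alpha}{2s} \sum_{c=1}^q v_c^2 - \sum_{c=1}^q v_c \log v_c + \log s \\
&= G^{\mathrm{P}}(v) + \log s,
\end{align*}
which shows that, up to the additive constant $\log s$, maximizing $G$ over matrices with identical rows is equivalent to maximizing $G^{\mathrm{P}}$ over $V$.

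Combining the two steps yields the claim: $\mu^* \in C(\gamma)$ maximizes $G^{\mathrm{bP}}$ precisely when every row of $\mu^*$ equals $v^*/s$ for some maximizer $v^*$ of $G^{\mathrm{P}}$ on $V$. The main obstacle — showing that the rows of any maximizer must coincide — has in fact already been dealt with in the previous Proposition; what remains here is the (minor) justification that a maximizer is an interior critical point so that the Proposition applies, followed by the clean substitution recorded above.
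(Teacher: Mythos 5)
Your proof is correct and follows essentially the same route as the paper's: reduce to identical-row matrices (which the paper treats as already established by Lemmas \ref{strictpos}, \ref{genumord}, \ref{lem:same columns} and the preceding Proposition) and then substitute $\mu_{kc}=v_c/s$ into \eqref{eq:defG} to recover $G^{\mathrm{P}}(v)$ up to the additive constant $\log s$. You merely make explicit the intermediate step — that a maximizer lies in $C^+(\gamma)$, hence is critical, hence has $G\ge G(Q)$, hence has identical rows by the preceding Proposition — which the paper leaves implicit.
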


\begin{proof}
The maximum of $G^{\mbox{bP}}$ on $C(\gamma)$ is attained on the subset of matrices with identical rows
taken from $\{v/s\mid v\in V\}$ and the value is equal to
\[
 \frac{\beta+(s-1)\alpha}{2s}\sum_{c=1}^qv_c^2-\sum_{c=1}^qv_c\log \frac{v_c}{s}.
\]
However, up to a minus sign and ignoring the summand $\log s$ this is the free energy
functional in a Potts model at inverse temperature $(\beta+(s-1)\alpha)/s$.
\end{proof}

The following theorem hence follows from the results in
\cite{Kesten_Potts,EllisWang-LimitTheoremsForTheEmpiricalVectorOfTheCWPottsModel}, where the
critical temperature and the behaviour of the Potts model is computed.

\begin{thm}\label{theo:phasetransition}
Consider the block spin Potts model in the asymptotically uniform case $\gamma_k = s^{-1}$. Denote
by $\zeta_q:=[2\frac{q-1}{q-2}\log(q-1)]$ the critical inverse temperature in the $q$-color Potts
model, and let $g:= \frac{\beta+(s-1)\alpha}s$.
Then the q-color block spin Potts model has a phase transition. More precisely, if
$g < \zeta_q$, then the distribution of $M'_N$ under the Gibbs measure
$\mu_{N,\alpha,\beta}$ concentrates in a unique point, the matrix with all entries identical to
$1/(sq)$.

To describe the  ``low temperature'' behavior define the function $\varphi: [0,1]\to \IR^q$:
$$
\varphi(t):= \Big(\frac{1+(q-1)t}{sq}, \frac{1-t}{sq}, \ldots, \frac{1-t}{sq}\Big)
$$
and let $u(g)$ be the largest solution of the equation
$$
u=\frac{1-e^{-g u}}{1+(q-1)e^{-g u}}.
$$
Finally let $n^1(g):= \varphi(u(g))$ and $n^i(g)$ be $n^1(g)$ with the $i'th$
and the first coordinate interchanged, $i=2, \ldots, q$. Let $\nu^i(g)$ be the matrix with all
rows identical to $n^i(g)$ and $Q$ be the matrix with all entries identical to $1/(qs)$.

Then, if $g > \zeta_q$ the distribution of $M_N'$ under the Gibbs measure
$\mu_{N,\alpha,\beta}$ concentrates in a (uniform) mixture of the Dirac measures in $\nu^1(g),
\ldots
\nu^q(g)$.

At $g =\zeta_q$ the limit points of  $M_N'$ under the Gibbs measure
$\mu_{N,\alpha,\beta}$ are
$Q$ and $\nu^1(g), \ldots \nu^q(g)$.
\end{thm}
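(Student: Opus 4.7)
The plan is to combine Corollary \ref{cor:equal block sizes limit} and Proposition \ref{prop:wrappingup} to reduce the problem to the classical analysis of the Curie--Weiss--Potts model, and then import the phase transition result from \cite{Kesten_Potts,EllisWang-LimitTheoremsForTheEmpiricalVectorOfTheCWPottsModel}.

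First I would invoke Corollary \ref{cor:equal block sizes limit} to identify the weak limit points of $M'_N$ with the minimisers of $J'$, equivalently the maximisers of $G = G^{\mathrm{bP}}$ on $C(\gamma)$. In the uniform case $\gamma_k = 1/s$, Proposition \ref{prop:wrappingup} then tells us that every such maximiser is a matrix all of whose rows coincide and equal $v/s$ for some maximiser $v \in V$ of
$$G^{\mathrm{P}}(v) = \frac{g}{2}\sum_{c=1}^q v_c^2 - \sum_{c=1}^q v_c \log v_c, \qquad g := \frac{\beta+(s-1)\alpha}{s},$$
on the open simplex $V$.

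Second, I would recognise $G^{\mathrm{P}}$ (up to additive constants depending only on $s$ and $q$) as the negative free-energy functional of the standard $q$-state Curie--Weiss--Potts model at inverse temperature $g$. The phase-transition picture for that model is by now classical: by \cite{Kesten_Potts,EllisWang-LimitTheoremsForTheEmpiricalVectorOfTheCWPottsModel}, for $g < \zeta_q$ the unique maximiser on $V$ is the uniform vector $(1/q,\ldots,1/q)$; for $g > \zeta_q$ there are exactly $q$ maximisers, obtained by permuting the coordinates of $(a,b,\ldots,b)$ with $a=(1+(q-1)u(g))/q$ and $b=(1-u(g))/q$, where $u(g)$ is characterised as the largest solution of the fixed-point equation in the statement; and for $g = \zeta_q$ both sets of points are maximisers simultaneously.

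Third, rescaling $v \mapsto v/s$ and stacking the result into an $s \times q$ matrix with identical rows translates the uniform Potts maximiser to $Q$ and the $q$ non-uniform maximisers to the matrices $\nu^1(g),\ldots,\nu^q(g)$, with the $i$-th coordinate playing the distinguished role in $\nu^i(g)$. Since the Hamiltonian, and therefore the Gibbs measure $\mu_{N,\alpha,\beta}$, is invariant under permutations of the colour labels $\{1,\ldots,q\}$, any weak limit must share this symmetry; in the low-temperature regime this forces the limit to be the uniform mixture $\frac{1}{q}\sum_{i=1}^q\delta_{\nu^i(g)}$ of the Dirac measures on the $q$ symmetry-related minimisers. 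The three cases of the theorem then correspond exactly to the three Potts-model regimes $g < \zeta_q$, $g > \zeta_q$ and $g = \zeta_q$.

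The main obstacle is not really a mathematical one --- by the time Proposition \ref{prop:wrappingup} is in hand, all the substantive work has been done --- but rather a careful matching of conventions: one has to check that the coefficient $g/2$ in $G^{\mathrm{P}}$ corresponds precisely to the usual Potts inverse temperature (so that the critical value is indeed $\zeta_q$ and not a rescaling of it), and that the vector $\varphi(u(g))$ appearing in the statement really equals $v/s$ for the maximiser $v$ produced by \cite{EllisWang-LimitTheoremsForTheEmpiricalVectorOfTheCWPottsModel}; both are routine once the dictionary is set up.
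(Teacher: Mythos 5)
Your proposal is correct and matches the paper's approach exactly: the paper obtains Theorem~\ref{theo:phasetransition} by the same chain — Corollary~\ref{cor:equal block sizes limit} to identify weak limit points with maximizers of $G$, Proposition~\ref{prop:wrappingup} to reduce to the single-block Curie--Weiss--Potts free energy at inverse temperature $g=(\beta+(s-1)\alpha)/s$, and then an appeal to \cite{Kesten_Potts,EllisWang-LimitTheoremsForTheEmpiricalVectorOfTheCWPottsModel} for the Potts phase-transition picture. The paper offers no further argument; your extra remarks on checking the $g/2$ normalisation and on colour-permutation symmetry forcing the uniform mixture are consistent with, and slightly more explicit than, what the cited references already supply.
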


\section{Logarithmic Sobolev inequalities}

In this section, we present logarithmic Sobolev inequalities for block spin Potts models. Logarithmic Sobolev inequalities are frequently used in concentration of measure theory (yielding non-asymptotic fluctuation and deviation results), for example, where they form the core of the well-known entropy method. See the monographs \cite{Led01} or \cite{BLM13} for an overview on these topics. Recently, logarithmic Sobolev inequalities for various type of finite spin systems have been established, starting with the Ising model in \cite{GSS18}, to be followed by exponential random graph, random coloring and hard-core models in \cite{SS18}. Here we continue this line of research by considering block spin Potts models.

Let us first recall some basic notions. For $\omega = (\omega_i)_{i \in S} \in \{1, \ldots, q\}^S$ and $i \in S$, we write $\omega_{i^c} := (\omega_j)_{j \ne i}$. Moreover, for any function $f \colon \{1, \ldots, q\}^S \to \mathbb{R}$, we define a certain ``difference operator'' by
\[
\lvert \mathfrak{d} f\rvert(\omega) = \Big(\sum_{i \in S} \int (f(\omega) - f(\omega_{i^c}, \omega_i'))^2 d\mu_N(\omega_i' \mid \omega_{i^c})\Big)^{1/2},
\]
where $\mu_N(\cdot \mid \omega_{i^c})$ denotes the regular conditional probability. The integrals may be regarded as a kind of ``local variance'' in the respective coordinate. The difference operator $\mathfrak{d}$ is a well-known object, and the integral of $|\mathfrak{d}f|^2$ with respect to $\mu_N$ can be regarded as a Dirichlet form (cf. \cite[Remark 2.2]{GSS18}). Finally, for any non-negative function $f$, $\mathrm{Ent}_{\mu_N} (f) := \int f \log(f) d\mu_N - \int f d\mu_N \log(\int f d\mu_N)$ denotes the entropy.

We now have the following log-Sobolev-type inequalities.

\begin{thm}\label{PottsLSIs}
	Assume that $2q\beta e^\beta < 1$.
	\begin{enumerate}
		\item For $N$ large enough, $\mu_N$ satisfies a logarithmic Sobolev inequality with respect to $\mathfrak{d}$. That is, for any function $f \colon \{1, \ldots, q\}^S \to \mathbb{R}$ we have
		\begin{equation}\label{PottsLSI}
			\mathrm{Ent}_{\mu_N}(f^2) \le 2\sigma_1^2 \int \lvert \mathfrak{d}f \rvert^2 d\mu_N.
		\end{equation}
		\item For $N$ large enough and any function $f \colon \{1, \ldots, q\}^S \to \mathbb{R}$ we have
		\begin{equation}\label{PottsmLSI1}
			\mathrm{Ent}_{\mu_N}(e^f) \le \sigma_2^2 \sum_{i \in S} \int \mathrm{Cov}_{\mu(\cdot \mid \omega_{i^c})}(f(\omega_{i^c}, \cdot), e^{f(\omega_{i^c},\cdot)}) d\mu(\omega).
		\end{equation}
		\item For $N$ large enough and any function $f \colon \{1, \ldots, q\}^S \to \mathbb{R}$ we have
		\begin{equation}\label{PottsmLSI2}
			\mathrm{Ent}_{\mu_N}(e^f) \le \frac{\sigma_3^2}{2} \int \lvert\mathfrak{d} f\rvert^2 e^f d\mu_N.
		\end{equation}
	\end{enumerate}
	Here, $\sigma_1, \sigma_2, \sigma_3 > 0$ are constants which depend on $\beta$ and $q$ only.
\end{thm}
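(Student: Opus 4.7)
The strategy will follow the general programme developed in \cite{GSS18} and \cite{SS18} for discrete spin systems: combine a uniform (modified) logarithmic Sobolev inequality for the single-site conditional distributions of $\mu_N$ with an approximate tensorization of entropy coming from a Dobrushin-type contraction condition. The hypothesis $2q\beta e^\beta<1$ will play exactly the role of the Dobrushin condition.

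The first step is to write down the one-point conditional distributions explicitly. For each $i\in S$ and $c\in\{1,\ldots,q\}$ a direct computation from the Hamiltonian gives
\[
\mu_N(\omega_i=c\mid\omega_{i^c})=\frac{e^{h_i^c(\omega_{i^c})}}{\sum_{c'=1}^q e^{h_i^{c'}(\omega_{i^c})}},\qquad h_i^c(\omega_{i^c})=\frac{\beta}{N}\sum_{\substack{j\sim i\\ j\ne i}}\eins_{\omega_j=c}+\frac{\alpha}{N}\sum_{j\not\sim i}\eins_{\omega_j=c}.
\]
Since $0<\alpha<\beta$ we have $h_i^c\in[0,\beta]$, so the conditional is a bounded perturbation of oscillation at most $\beta$ of the uniform distribution on $\{1,\ldots,q\}$. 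The uniform measure on a $q$-point space satisfies both an LSI and a modified LSI with constants depending only on $q$, so by the Holley--Stroock perturbation principle each $\mu_N(\cdot\mid\omega_{i^c})$ satisfies the corresponding single-site (modified) LSIs with constants of order $e^{2\beta}$, uniformly in the conditioning.

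The second step is to estimate the Dobrushin interdependence matrix $D=(D_{ij})_{i,j\in S}$, where $D_{ij}$ bounds in total variation the change of $\mu_N(\cdot\mid\omega_{i^c})$ produced by altering $\omega_j$. From the softmax form above, flipping $\omega_j$ alters each $h_i^c$ by at most $\beta/N$ in two of the $q$ coordinates; differentiating the softmax then yields $D_{ij}\le 2q\beta e^\beta/N$ for $i\ne j$. Consequently the row sums of $D$ are bounded by $2q\beta e^\beta$, and for $N$ large enough to absorb the diagonal boundary term of order $\beta/N$ the assumption $2q\beta e^\beta<1$ gives $\|D\|_{\mathrm{op}}\le\|D\|_\infty<1$. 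With both ingredients in hand, I would invoke the approximate tensorization of entropy under a Dobrushin condition --- for $\Ent(f^2)$ as in \cite{GSS18} and for the exponential entropy $\Ent(e^f)$ as in \cite{SS18} --- schematically
\[
\Ent_{\mu_N}(f^2)\le\frac{C}{(1-\|D\|_{\mathrm{op}})^{k}}\sum_{i\in S}\int \Ent_{\mu_N(\cdot\mid\omega_{i^c})}\bigl(f^2(\omega_{i^c},\cdot)\bigr)\,d\mu_N,
\]
and analogously for $\Ent(e^f)$. Plugging in the uniform single-site (modified) LSIs from the first step and translating the right-hand side into the Dirichlet form $\int|\mathfrak{d}f|^2\,d\mu_N$ (respectively the covariance expression appearing in \eqref{PottsmLSI1}) then yields \eqref{PottsLSI}, \eqref{PottsmLSI1}, and \eqref{PottsmLSI2}, with constants $\sigma_j^2$ of the form $C(q)e^{C'\beta}$.

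The main difficulty is the Dobrushin estimate in step two. In the Ising case of \cite{GSS18} the conditional is a Bernoulli whose parameter is a hyperbolic-tangent function, and the contraction coefficient is cleanly controlled by $\sech^2$; in the Potts setting one must instead bound the sensitivity of a softmax on $q$ states to a single-coordinate perturbation, and it is precisely this estimate that produces the factor $q$ in the hypothesis $2q\beta e^\beta<1$. Once the Dobrushin condition is in place, the remaining steps are a fairly mechanical adaptation of the tensorization machinery of \cite{GSS18,SS18}, and the restriction ``$N$ large enough'' in the statement serves only to swallow the $O(1/N)$ diagonal contribution coming from the $i=j$ term in the Hamiltonian.
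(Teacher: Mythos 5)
Your approach is essentially the same as the paper's: both establish an approximate tensorization of entropy via a Dobrushin-type contraction condition (the paper invokes Marton's criterion in the form of \cite[Theorem~4.1]{SS18}, which requires exactly the uniform lower bound on the one-site conditional probabilities and the operator-norm bound $\lVert J\rVert_{2\to 2}<1$ that you sketch), and then pass from the conditional single-site entropies to the Dirichlet-form/covariance terms in the statement (you via Holley--Stroock on the $q$-point space, the paper via \cite[(2.10)--(2.12)]{SS19}). The only substantive work in either version is the Taylor-expansion estimate of the total-variation sensitivity of $\mu_N(\cdot\mid\omega_{i^c})$ to a single flip, which produces the factor $2q\beta e^\beta$ and which you correctly identify as the crux.
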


It is possible to give explicit values for $\sigma_1, \sigma_2, \sigma_3$ which depend on quantities which stem from the conditional probabilities $\mu_N(\cdot |\omega_{i^c})$. For details, see Remark \ref{constants}, where we also comment on the condition on $N$.

Both \eqref{PottsmLSI1} and \eqref{PottsmLSI2} are also known as modified logarithmic Sobolev inequalities. In fact, if $\mathfrak{d}f$ is replaced by another difference operator which satisfies the chain rule (for instance, the ordinary Euclidean gradient), then \eqref{PottsLSI} and \eqref{PottsmLSI2} are equivalent, but obviously, this is not true for $\mathfrak{d}f$.

Let us briefly discuss the results from Theorem \ref{PottsLSIs}, including possible applications.  From \eqref{PottsLSI} and \eqref{PottsmLSI2}, we may derive concentration of measure bounds by various techniques. For instance, based on \eqref{PottsLSI} we obtain $L^p$ norm inequalities for any function $f \in L^\infty(\mu_N)$ and subsequently concentration results, cf.\ \cite[Proposition 2.4, Theorem 1.5]{GSS18}. Moreover, \eqref{PottsmLSI2} gives rise to subgaussian tail bounds for Lipschitz-type functions $f$ (in the sense of $|\mathfrak{d}f| \le 1$) by applying the Herbst argument, see e.\,g.\ \cite{SS19} (where also slightly more advanced situations are discussed, cf.\ Section 2.4). Recently, in \cite{APS20} it has been established that \eqref{PottsmLSI2} moreover gives rise to $L^p$ bounds (via Beckner inequalities) as well. Especially for the case of spin systems and Glauber dynamics, cf.\ the discussion in Section 4.3 therein.

To consider a simple example, let
\[
T_{k,c}(\omega) := \sum_{i \in S_k} \eins_{\omega_i = c}
\]
denote the number of vertices in the block $S_k$ which have colour $c$. It is easy to check that $|\mathfrak{d}T_{k,c}|^2 \le |S_k|$, and therefore, using \cite[Equation (1.2)]{SS19}, we immediately obtain that
\[
\mu_N(|T_{k,c} - \mu_N(T_{k,c})| \ge t) \le 2 \exp \Big(-\frac{t^2}{2|S_k|\sigma_3^2}\Big),
\]
where $\mu_N(T_{k,c}) := \int T_{k,c} d\mu_N$. Note that for $N$ large, this probability approaches $2 \exp(-t^2/(2N\gamma_k\sigma_3^2))$.

Furthermore, a special class of non-Lipschitz functions for which concentration bounds based on inequalities of type \eqref{PottsLSI} and \eqref{PottsmLSI2} have been established in recent years are so-called multilinear polynomials, i.\,e.\ polynomials which are affine with respect to each variable. See for instance \cite[Theorem 5]{GSS18b} and \cite[Corollary 5.4]{APS20}.

Finally, note that \eqref{PottsmLSI1} is frequently used in the context of Markov processes, and it can be shown to be equivalent to exponential decay of the relative entropy along the Glauber semigroup (cf.\ e.\,g.\ \cite{BT06} or \cite{CMT15}). As shown in \cite[Theorem 2.2]{SS18}, it moreover implies that the associated Glauber dynamics is rapidly mixing, i.\,e.\ its mixing time is of order $O(N\log N)$. This complements the results from \cite{BGP16}, where a different situation was considered (the usual Potts model without blocks but on graphs with fixed maximal degree).

The remaining part of this section is devoted to the proof of Theorem \ref{PottsLSIs}. To convey the basic idea, recall that for product measures $\otimes_{i=1}^n \mu_i$, the entropy functional tensorizes in the sense that
\[
\mathrm{Ent}_{\otimes_{i=1}^n \mu_i} (f) \le \sum_{i=1}^{N} \int \mathrm{Ent}_{\mu_i}(f) d\mu,
\]
and therefore, proving logarithmic Sobolev inequalities reduces to controlling each coordinate separately, i.\,e.\ a ``one-dimensional'' case. For non-product measures, an inequality of this type no longer holds true, but if the dependencies are sufficiently weak, an analogue can be shown which is known as an \emph{approximate tensorization property}. For probability spaces with finitely many atoms, a suitable criterion to establish approximate tensorization was introduced in \cite{Ma15}, which we will exploit in the sequel.

\begin{proposition}\label{PottsApproxTen}
	Assume that $2q\beta e^\beta < 1$. For $N$ large enough, the approximate tensorization property of entropy holds, i.\,e.
	\[
	\mathrm{Ent}_{\mu_N}(f) \le C \sum_{i \in S} \int \mathrm{Ent}_{\mu_N(\cdot | \omega_{i^c})}(f(\omega_{i^c}, \cdot)) d\mu_N(\omega)
	\]
	with $C$ depending on $\beta$ and $q$ only.
\end{proposition}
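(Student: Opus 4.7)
The natural approach is to verify the hypotheses of Marton's approximate tensorization criterion from \cite{Ma15}, which applies to probability measures on finite product spaces and reduces the problem to two quantitative inputs: a uniform lower bound on the one-site conditional probabilities, and a smallness condition on a Dobrushin-type matrix of influences between coordinates. This is precisely the same strategy used in \cite{SS18} for exponential random graphs, random colourings, and hard-core models, so the task here is to execute the analogous computation for the block spin Potts model.

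The first step is to write the conditional distributions explicitly. For $i \in S_k$ and any configuration $\omega_{i^c}$, one has
\[
\mu_N(\omega_i = c \mid \omega_{i^c}) = \frac{e^{h_c(\omega_{i^c})}}{\sum_{c' = 1}^q e^{h_{c'}(\omega_{i^c})}}, \qquad h_c(\omega_{i^c}) := \frac{\beta}{N} \sum_{\substack{j \in S_k \\ j \neq i}} \eins_{\omega_j = c} + \frac{\alpha}{N} \sum_{j \notin S_k} \eins_{\omega_j = c}.
\]
Each logit $h_c$ lies in $[0, \beta]$, so every conditional probability is bounded below by $\delta := e^{-\beta}/q$, which is the first input for Marton's criterion.

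For the second input I would fix $j \neq i$ and compare $\mu_N(\omega_i \in \cdot \mid \omega_{i^c})$ with $\mu_N(\omega_i \in \cdot \mid \omega_{i^c}')$ when $\omega_{i^c}$ and $\omega_{i^c}'$ differ only at coordinate $j$. Flipping $\omega_j$ from one colour to another alters at most two of the logits $h_c$, each by at most $\beta/N$ in absolute value. Combined with the elementary bound $\|p - p'\|_{\mathrm{TV}} \leq \tfrac{1}{2}(e^{\eta} - 1)$ for distributions with log-densities differing in sup norm by $\eta$, this yields a Dobrushin influence coefficient $C_{ij}$ of order $\beta/N$ uniformly in the remaining coordinates. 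Summing over the $N - 1$ possible $j$'s produces a row sum of order $\beta$, and it is the interplay between this row sum and the factor $1/\delta = q e^\beta$ in Marton's criterion that I expect to yield precisely the hypothesis $2q\beta e^\beta < 1$.

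The main obstacle is not any serious analytic difficulty but rather the bookkeeping of constants. One has to extract from \cite{Ma15} the exact norm in which the influence matrix enters and then verify that combining the linearization $e^{\beta/N} - 1 \leq 2\beta/N$ (valid for $N$ large, which is the source of the ``$N$ large enough'' caveat) with the lower bound $\delta = e^{-\beta}/q$ reproduces the stated threshold $2q\beta e^\beta < 1$. Once this calibration is in place, Marton's theorem delivers the approximate tensorization inequality with a constant $C$ depending only on $\beta$ and $q$, and in particular independent of the block partition $(\gamma_k)$, since the influence estimates depend on $i$ and $j$ only through whether they lie in the same block or not.
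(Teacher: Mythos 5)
Your high-level strategy matches the paper's exactly: both invoke Marton's approximate tensorization criterion from \cite{Ma15} (in the form of \cite[Theorem 4.1]{SS18}), whose two inputs are a uniform lower bound $\gamma_1$ on the one-site conditionals and a bound $\|J\|_{2\to2}\le 1-\gamma_2<1$ on the influence matrix $J_{ij}=\sup d_{\mathrm{TV}}(\mu_N(\cdot\mid\omega_{i^c}),\mu_N(\cdot\mid\sigma_{i^c}))$. Your lower bound $\delta=e^{-\beta}/q$ is correct and is the paper's $\gamma_1$. Where you diverge is in the estimate of $J_{ij}$, and here your route is actually cleaner: you compare the two conditional laws directly through the ratio of their normalized log-densities, noting that a single-site flip of $\omega_j$ perturbs two logits $h_c$ by $\pm\delta_j$ ($\delta_j=\beta/N$ if $i\sim j$, $\alpha/N$ otherwise), hence (after accounting for the change in the normalizer $Z$, which you should make explicit) $|\log(p(c)/p'(c))|\le 2\delta_j$ and $J_{ij}\le \tfrac12(e^{2\delta_j}-1)$; summing over $j$ gives a row sum tending to $\gamma_k\beta+(1-\gamma_k)\alpha\le\beta$. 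The paper instead writes the conditional as $h\bigl(\sum_{c'\ne c}\exp(H_N(\omega_{i^c},c)-H_N(\omega_{i^c},c'))\bigr)$ with $h(x)=1/(1+x)$ and applies the global $1$-Lipschitz bound on $h$ at arguments of size up to $e^\beta$, which is lossy; this, together with the double sum over colour pairs, is precisely where the paper's factor $2qe^\beta$ enters, giving $\|J\|_{\infty\to\infty}\le 2q\beta e^\beta+O(N^{-1})$.

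This exposes the one genuine misconception in your write-up: you expect the threshold $2q\beta e^\beta<1$ to emerge from an ``interplay'' between the row sum of $J$ and the factor $1/\delta=qe^\beta$. That is not how Marton's criterion works here. The validity of approximate tensorization requires only $\|J\|_{2\to2}<1$ together with $\gamma_1>0$; the conditional lower bound $\gamma_1$ does not multiply into the threshold, it only enters the final constant $C=(\gamma_1\gamma_2^2)^{-1}$. So if you carry your own estimate to completion you will \emph{not} reproduce the threshold $2q\beta e^\beta<1$ — you will get the strictly weaker sufficient condition $\beta<1$ (more precisely $\max_k(\gamma_k\beta+(1-\gamma_k)\alpha)<1$), which the stated hypothesis implies since $2q\beta e^\beta\ge 6\beta$ for $q\ge3$. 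Your proof is therefore sound and in fact slightly stronger than the paper's; just drop the expectation that $\delta$ feeds into the threshold, make the normalizer step in the TV estimate explicit, and note that bounding $\|J\|_{\infty\to\infty}$ controls $\|J\|_{2\to2}$ for the symmetric nonnegative matrix $J$.
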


\begin{proof}
	As pointed out before, the proof of Proposition \ref{PottsApproxTen} works by applying Marton's approximate tensorization result \cite{Ma15} in the slightly rewritten and corrected form stated in \cite[Theorem 4.1]{SS18}. Essentially, we need to control the conditional probabilities $\mu_N(\cdot | \omega_{i^c})$, which we rewrite in the sequel, generalizing the case of the usual Potts model without blocks as in \cite[Proposition 2.16]{Sin18}. Recalling the matrix $B=(b_{k,c})$ from \eqref{matrixB}, we fix two sites $i,j \in S$ and define
	\[
	b_{k,ij,c} \coloneqq \sum_{\substack{\nu \in S_k \\ \nu \notin \{i,j\}}} \eins_{\omega_\nu = c}.
	\]
	Then, the Hamiltonian may be decomposed as follows:
	\begin{align*}
		-H_N(\omega) &= \frac{\beta}{2N} \sum_{c=1}^{q} \sum_{k=1}^{s} (b_{k,ij,c} + \eins_{i \in S_k, \omega_i = c} + \eins_{j \in S_k, \omega_j = c})^2\\
		&\quad + \frac{\alpha}{2N} \sum_{c=1}^{q} \sum_{k=1}^{s} \sum_{k' : k' \ne k} (b_{k,ij,c} + \eins_{i \in S_k, \omega_i = c} + \eins_{j \in S_k, \omega_j = c})\\
		&\hspace{4cm}(b_{k',ij,c} + \eins_{i \in S_{k'}, \omega_i = c} + \eins_{j \in S_{k'}, \omega_j = c}).
	\end{align*}
	Now, writing
	\begin{align*}
		&(b_{k,ij,c} + \eins_{i \in S_k, \omega_i = c} + \eins_{j \in S_k, \omega_j = c})^2
		= b_{k,ij,c}^2 + 2 b_{k,ij,c}(\eins_{i \in S_k, \omega_i = c} + \eins_{j \in S_k, \omega_j = c})\\
		&\hspace{4cm}+ \eins_{i \in S_k, \omega_i = c} + \eins_{j \in S_k, \omega_j = c} + 2 \cdot \eins_{i \in S_k, \omega_i = c}\eins_{j \in S_k, \omega_j = c}
	\end{align*}
	and summing over $c$ and $k$, we obtain
	\begin{align*}
		&\sum_{c=1}^{q} \sum_{k=1}^{s} (b_{k,ij,c} + \eins_{i \in S_k, \omega_i = c} + \eins_{j \in S_k, \omega_j = c})^2\\
		= \ &\sum_{c=1}^{q} \sum_{k=1}^{s} b_{k,ij,c}^2 + 2 \sum_{k=1}^s b_{k,ij,\omega_i} \eins_{i \in S_k} + 2 \sum_{k=1}^s b_{k,ij,\omega_j} \eins_{j \in S_k} + 2 + 2 \cdot \eins_{i \sim j, \omega_i = \omega_j}.
	\end{align*}
	Similarly,
	\begin{align*}
		&(b_{k,ij,c} + \eins_{i \in S_k, \omega_i = c} + \eins_{j \in S_k, \omega_j = c})(b_{k',ij,c} + \eins_{i \in S_{k'}, \omega_i = c} + \eins_{j \in S_{k'}, \omega_j = c\}})\\
		= \ &b_{k,ij,c}b_{k',ij,c} + b_{k,ij,c}(\eins_{i \in S_{k'}, \omega_i = c} + \eins_{j \in S_{k'}, \omega_j = c}) + b_{k',ij,c} (\eins_{i \in S_k, \omega_i = c} + \eins_{j \in S_k, \omega_j = c})\\
		&+ (\eins_{i \in S_k, \omega_i = c} + \eins_{j \in S_k, \omega_j = c})(\eins_{i \in S_{k'}, \omega_i = c} + \eins_{j \in S_{k'}, \omega_j = c}),
	\end{align*}
	leading to
	\begin{align*}
		&\sum_{c=1}^{q} \sum_{k=1}^{s} \sum_{k' : k' \ne k} (b_{k,ij,c} + \eins_{i \in S_k, \omega_i = c} + \eins_{j \in S_k, \omega_j = c})(b_{k',ij,c} + \eins_{i \in S_{k'}, \omega_i = c} + \eins_{j \in S_{k'}, \omega_j = c})\\
		= \ &\sum_{c=1}^{q} \sum_{k=1}^{s} \sum_{k' : k' \ne k} b_{k,ij,c}b_{k',ij,c} + 2 \sum_{k=1}^s b_{k,ij,\omega_i} \eins_{i \notin S_k} + 2 \sum_{k=1}^s b_{k,ij,\omega_j} \eins_{j \notin S_k} + 2 \cdot \eins_{i \nsim j, \omega_i = \omega_j}.
	\end{align*}
	
	Altogether, we arrive at the representation
	\begin{align*}
		-H_N(\omega)
		= \ &\frac{\beta}{2N} \sum_{c=1}^{q} \sum_{k=1}^{s} b_{k,ij,c}^2 + \frac{\beta}{N} \sum_{k=1}^s b_{k,ij,\omega_i} \eins_{i \in S_k} + \frac{\beta}{N} \sum_{k=1}^s b_{k,ij,\omega_j} \eins_{j \in S_k} + \frac{\beta}{N}\\
		&+ \frac{\beta}{N} \eins_{i \sim j, \omega_i = \omega_j} + \frac{\alpha}{2N} \sum_{c=1}^{q} \sum_{k=1}^{s} \sum_{k' : k' \ne k} b_{k,ij,c}b_{k',ij,c}\\
		&+ \frac{\alpha}{N} \sum_{k=1}^s b_{k,ij,\omega_i} \eins_{i \notin S_k} + \frac{\alpha}{N} \sum_{k=1}^s b_{k,ij,\omega_j} \eins_{j \notin S_k} + \frac{\alpha}{N} \eins_{i \nsim j, \omega_i = \omega_j}.
	\end{align*}
	In particular, the conditional probabilities given $\omega_{i^c}$ can be written as
	\begin{align*}
		\mu_N(c | \omega_{i^c}) &= \frac{\exp(-H_N(\omega_{i^c}, c))}{\sum_{c'=1}^{q}\exp(-H_N(\omega_{i^c}, c'))}\\
		&= \frac{1}{1 + \sum_{c' : c' \ne c}\exp(H_N(\omega_{i^c}, c)- H_N(\omega_{i^c}, c'))}\\
		&= h\big(\sum_{c' : c' \ne c}\exp(H_N(\omega_{i^c}, c)- H_N(\omega_{i^c}, c'))\big),
	\end{align*}
	where $h(x) = 1/(1+x)$, which is $1$-Lipschitz. Here, using the representation derived above, we have
	\begin{align*}
		&H_N(\omega_{i^c}, c)- H_N(\omega_{i^c}, c')\\
		= \ &\frac{\beta}{N} \sum_{k=1}^s (b_{k,ij,c'} - b_{k,ij,c}) \eins_{i \in S_k} + \frac{\beta}{N} (\eins_{\omega_j = c'} - \eins_{\omega_j = c}) \eins_{i \sim j}\\
		&\hspace{2cm}+ \frac{\alpha}{N} \sum_{k=1}^s (b_{k,ij,c'} - b_{k,ij,c}) \eins_{i \notin S_k} + \frac{\alpha}{N} (\eins_{\omega_j = c'} - \eins_{\omega_j = c}) \eins_{i \nsim j}\\
		= \ &\frac{1}{N}\sum_{k=1}^s (b_{k,ij,c'} - b_{k,ij,c}) (\beta \eins_{i \in S_k}+ \alpha \eins_{i \notin S_k}) + \frac{1}{N}(\eins_{\omega_j = c'} - \eins_{\omega_j = c})(\beta \eins_{i \sim j} + \alpha \eins_{i \nsim j}).
	\end{align*}
	
	From this representation, we now derive two facts: first,
	\begin{equation}
		\label{Bed1}
		\min_{i\in S} \min_{\omega \in \{1, \ldots, q\}^S} \mu_N(\omega_i | \omega_{i^c}) \ge \gamma_1
	\end{equation}
	for some $\gamma_1 > 0$ which only depends on $\beta$ but not on $N$ (here we have used that $\alpha < \beta$).
	
	Moreover, we need to control the operator norm $\lVert J \rVert_{2 \to 2}$ of the $N \times N$ matrix $J$ whose entries $J_{ij}$ are given by
	\[
	J_{ij} = \sup d_\mathrm{TV} (\mu_N(\cdot |\omega_{i^c}), \mu_N(\cdot |\sigma_{i^c})),
	\]
	where the sup is taken over all configurations $\omega, \sigma$ which differ at site $j$ only. Obviously, this can be upper bounded by the $\ell^\infty \to \ell^\infty$ norm $\lVert J \rVert_{\infty \to \infty}$. To bound the latter, we fix two such configurations, i.\,e. $\omega_j \ne \sigma_j$ and $\omega_{j^c} = \sigma_{j^c}$. It follows that for any $i \ne j$,
	\begin{align*}
		&d_\mathrm{TV} (\mu_N(\cdot |\omega_{i^c}), \mu_N(\cdot |\sigma_{i^c})) = \frac{1}{2} \sum_{c=1}^{q} \abs{\mu_N(c |\omega_{i^c})- \mu_N(c |\sigma_{i^c})}\\
		= \ &\frac{1}{2} \sum_{c=1}^{q} \Big|h\big(\sum_{c' : c' \ne c}\exp(H_N(\omega_{i^c}, c)- H_N(\omega_{i^c}, c'))\big)\\
		&\hspace{2cm}- h\big(\sum_{c' : c' \ne c}\exp(H_N(\sigma_{i^c}, c)- H_N(\sigma_{i^c}, c'))\big)\Big|\\
		\le \ &\frac{1}{2} \sum_{c=1}^{q} \Big| \sum_{c' : c' \ne c} \Big(\exp(H_N(\omega_{i^c}, c)- H_N(\omega_{i^c}, c')) - \exp(H_N(\sigma_{i^c}, c)- H_N(\sigma_{i^c}, c'))\Big) \Big|\\
		\le \ &\frac{1}{2} \sum_{c=1}^{q} \sum_{c' : c' \ne c} \exp \Big(\frac{1}{N}\sum_{k=1}^s (b_{k,ij,c'} - b_{k,ij,c}) (\beta \eins_{\{i \in S_k\}}+ \alpha \eins_{\{i \notin S_k\}})\Big)\\
		&\hspace{2cm}\Big|\exp\Big(\frac{1}{N}(\eins_{\omega_j = c'} - \eins_{\omega_j = c})(\beta \eins_{i \sim j} + \alpha \eins_{i \nsim j})\Big)\\
		&\hspace{3cm}- \exp\Big(\frac{1}{N}(\eins_{\sigma_j = c'} - \eins_{\sigma_j = c})(\beta \eins_{i \sim j} + \alpha \eins_{i \nsim j})\Big)\Big|\\
		\le \ &\frac{1}{2} e^\beta \sum_{c=1}^{q} \sum_{c' : c' \ne c} \Big|\exp\Big(\frac{1}{N}(\eins_{\omega_j = c'} - \eins_{\omega_j = c})(\beta \eins_{i \sim j} + \alpha \eins_{i \nsim j})\Big)\\
		&\hspace{2cm}- \exp\Big(\frac{1}{N}(\eins_{\sigma_j = c'} - \eins_{\sigma_j = c})(\beta \eins_{i \sim j} + \alpha \eins_{i \nsim j})\Big)\Big|\\
		\eqqcolon \ &e^\beta \sum_{c=1}^{q} \sum_{c' : c' \ne c} I(c, c').
	\end{align*}
	Here, the first inequality follows from the Lipschitz property, in the second inequality we have used that $b_{k,ij,c}(\omega) = b_{k,ij,c}(\sigma)$ for all $c$ since $\omega_{j^c} = \sigma_{j^c}$, and in the last inequality we have used $\alpha < \beta$ once again. Note that whenever $\{c, c' \} \cap \{\omega_j, \sigma_j \} = \emptyset$, we clearly have $I(c, c') = 0$. It follows that the right-hand side can be written as
	\[
	e^\beta \Big(\sum_{c' \ne \omega_j} I(\omega_j, c') + \sum_{c' \ne \sigma_j} I(\sigma_j, c') + \sum_{c \ne \omega_j} I(c, \omega_j) + \sum_{c \ne \sigma_j} I(c, \sigma_j) \Big).
	\]
	By Taylor expansion, assuming $i \sim j$ it is not hard to see that
	\begin{align*}
		I(\omega_j, \sigma_j) &= I(\sigma_j, \omega_j) \le \frac{2\beta}{N} + o((\beta/N)^2),\\
		I(\omega_j, c') &= I(\sigma_j, c') = I(c, \omega_j) = I(c, \sigma_j) \le \frac{\beta}{N} + O((\beta/N)^2),
	\end{align*}
	where in the second line $c, c' \notin \{\omega_j, \sigma_j\}$. The same bounds with $\beta$ replaced by $\alpha$ hold if $i \nsim j$. Altogether, it follows that
	\begin{align*}
		\lVert J \rVert_{\infty \to \infty} = \max_{i\in S} \sum_{j\in S} |J_{ij}| &\le \frac{4(N-1)}{2} e^\beta\Big((q-2)\frac{\beta}{N} + 2\frac{\beta}{N} + O_\beta(N^{-2})\Big)\\
		&\le 2q\beta e^\beta + O_\beta(N^{-1}).
	\end{align*}
	In particular, if $N$ is sufficiently large, then
	\begin{equation}\label{Bed2}
		\lVert J \rVert_{2 \to 2} \le 1 - \gamma_2
	\end{equation}
	for any $\gamma_2 < 1 - 2q\beta e^\beta$.
	
	Combining \eqref{Bed1} and \eqref{Bed2} and applying \cite[Theorem 4.1]{SS18} (which in particular requires $\gamma_2 \in (0,1)$), we arrive at the result.
\end{proof}

\begin{proof}[Proof of Theorem \ref{PottsLSIs}]
	Once we have established Proposition \ref{PottsApproxTen}, this follows immediately from \cite[Proof of Theorem 4.1]{SS18} and \cite[(2.10)--(2.12)]{SS19}.
\end{proof}

\begin{rem}\label{constants}
	By a closer look at the respective proofs, it is possible to give explicit values for the constants appearing in Theorem \ref{PottsLSI} and Proposition \ref{PottsApproxTen} depending on the quantities $\gamma_1$ and $\gamma_2$ from the proof of Proposition \ref{PottsApproxTen}. Indeed, we may set $C = (\gamma_1\gamma_2^2)^{-1}$, $\sigma_2^2 = \sigma_3^2 = C$ and $\sigma_1^2= \log(\gamma_1^{-1})C/\log(4)$.
	
	Moreover, requiring $N$ to be large enough means that $N$ must be so large that $2q\beta e^\beta + O_\beta(N^{-1}) < 1$ in the asymptotics leading to \eqref{Bed2}.
\end{rem}


\end{document}